\title{Non-existence of standard compact Clifford-Klein forms of homogeneous spaces of exceptional Lie groups}
\author{Maciej Boche\'nski, Piotr Jastrz\c ebski, Aleksy Tralle}
\begin{document}

\newtheorem{theorem}{Theorem}
\newtheorem{proposition}{Proposition}
\newtheorem{lemma}{Lemma}
\newtheorem{definition}{Definition}
\newtheorem{example}{Example}
\newtheorem{note}{Note}
\newtheorem{corollary}{Corollary}
\newtheorem{remark}{Remark}
\newtheorem{conjecture}{Conjecture}
\newtheorem{problem}{Problem}

\maketitle{}
\abstract{We use a computer-aided approach to prove that there are no standard compact Clifford-Klein forms of homogeneous spaces of exceptional Lie groups. This yields further support for Kobayashi's conjecture about possible compact Clifford-Klein forms. Our approach is based on the algorithms developed in this work. They are inspired by  the algorithmic  methods of classifying semisimple subalgebras in simple  Lie algebras developed by Faccin and de Graaf. Also, we use the databases created by them. These algorithms eliminate the majority of possibilities. We complete the proof using Kobayashi's criterion for properness of the Lie group action.}
\vskip6pt
\noindent {\it Keywords:} proper actions,  semisimple algebras, Clifford-Klein forms.
\vskip6pt
\noindent {\it AMS Subject Classification:} 57S30, 17B20, 22F30, 22E40, 65-05, 65F

\section{Introduction}\label{sec:introd}
Let $G$ be a semisimple linear real Lie group, $H\subset G$ a reductive (non-compact) subgroup such that $G/H$ is non-compact. We say that $G/H$ admits a compact Clifford-Klein form, if there exists a discrete subgroup $\Gamma \subset G$ acting properly, freely and co-compactly on $G/H$. The problem of determining which reductive homogeneous spaces admit such forms goes back to Calabi and Markus (\cite{cm}), Kulkarni (for some pseudo-Riemannian space forms, \cite{ku}) and was formulated as a research program by T. Kobayashi \cite{kob}. 
One of the important and challenging problems in the whole area is  Kobayashi's conjecture (see \cite{ko}, Conjecture 3.3.10). Following \cite{kas-kob} we say that $G/H$ admits a \textit{standard compact Clifford-Klein form}, if there exists a reductive Lie subgroup $L\subset G$ such that $L$ acts properly on $G/H$ and $L\backslash G/H$ is compact. Note that in this case any co-compact torsion-free lattice $\Gamma\subset L$ yields a compact Clifford-Klein form.  The Kobayashi conjecture states that for the homogeneous spaces $G/H$ of reductive type, the existence of a compact Clifford-Klein form on $G/H$ implies the existence of a standard one. Note that the conjecture does not say that all compact Clifford-Klein forms are standard. There are examples of non-standard ones (see \cite{kas}, \cite{kdef}), obtained by a deformation of a lattice $\Gamma\subset L$. 
\noindent

In fact all known examples of standard Clifford-Klein forms  can be obtained as follows. Assume that there exists a reductive Lie subgroup $L\subset G$ such that $G=L\cdot H$,  and $L\cap H$ is compact. Under these assumptions we see that 
\begin{itemize}
\item $L$ acts transitively on $G/H$ and there is a diffeomorphism $G/H\simeq L/(L\cap H),$
\item since $L\cap H$ is compact, any co-compact lattice $\Gamma\subset L$ acts properly and co-compactly on $L/(L\cap H)$ and hence on $G/H$.
\end{itemize}
Notice that  Kobayashi's conjecture indicates that compact Clifford-Klein forms of non-compact homogeneous spaces $G/H$ of reductive type are special. There are many partial results which yield a strong evidence for the conjecture. For instance there are topological obstructions \cite{kob-ono}, \cite{mor}, \cite{Th},  results of T. Kobayashi \cite{kob-d}, \cite{kob-ws}, \cite{kob-pm},  Margulis \cite{mar}, Zimmer \cite{zim}, Hee Oh and Witte \cite{ow}, Okuda \cite{ok} and the authors of this paper \cite{bjstw}, \cite{btjo}, \cite{btc}, \cite{btams}.  However, Kobayashi's question (which is challenging and mathematically interesting) is still to be answered. It should be noted that Kobayashi found a criterion for the existence of standard Clifford-Klein forms in terms of the data of $G/H$: one needs to know the non-compact dimensions of $G,H,L$ and the action of the little Weyl group of $G$ on the non-compact parts of the real split Cartan subalgebras of the Lie algebras $\mathfrak{g}$, $\mathfrak{h}$ and $\mathfrak{l}$  of $G,$ $H$, $L,$ respectively (\cite{kob}, Theorem 4.1 and Theorem 4.7). However, this approach works for the {\it given} triples $(G,H,L)$ and the {\it known} embeddings of $H$ and $L$ into $G$ (basically, described in terms of the roots systems).

\noindent
The aim of this paper is to prove the following.
\begin{theorem}
Let $G$ be a simple linear real Lie group of \textit{exceptional} type, $H\subset G$ a reductive non-compact subgroup such that $G/H$ is non-compact. Then $G/H$ does not admit standard compact Clifford-Klein forms.
\label{tw1}
\end{theorem}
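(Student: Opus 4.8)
The plan is to reduce the statement to a finite computation over a complete list of candidate triples $(\mathfrak g,\mathfrak h,\mathfrak l)$ and then eliminate every candidate using Kobayashi's properness criterion. First I would recall the cohomological/dimensional constraints that any standard compact Clifford-Klein form imposes: if $L$ acts properly and cocompactly on $G/H$, then the non-compact dimensions must satisfy $d(L)+d(H)=d(G)$, where $d(\cdot)$ denotes the dimension of the associated symmetric space (equivalently $\dim\mathfrak g/\mathfrak k$). This is the numerical shadow of the Kobayashi properness criterion (\cite{kob}, Theorem 4.1) together with cocompactness, and it already rules out most pairs $(\mathfrak h,\mathfrak l)$ of reductive subalgebras. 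Since $G$ is exceptional, $d(G)$ is one of a handful of explicit integers, so I would enumerate all pairs of conjugacy classes of reductive (non-compact) subgroups $H,L$ with $d(H)+d(L)=d(G)$ and with $\mathfrak h,\mathfrak l$ not contained in a common maximal compact — this is where the Faccin--de Graaf databases of semisimple subalgebras of exceptional Lie algebras enter, providing the combinatorial input.

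The second, and main, step is to apply Kobayashi's properness criterion in the form: $L$ acts properly on $G/H$ if and only if, for the maximally split abelian subspaces $\mathfrak a_{\mathfrak l}\subset\mathfrak l$ and $\mathfrak a_{\mathfrak h}\subset\mathfrak h$ viewed inside a maximal $\mathbb R$-split Cartan $\mathfrak a\subset\mathfrak g$ (after conjugation), one has $W\cdot\mathfrak a_{\mathfrak l}\cap\mathfrak a_{\mathfrak h}=\{0\}$, where $W$ is the little Weyl group of $G$ — here "properly" must be checked via the $a\not\sim b$ relation of \cite{kob} rather than just dimensions. Concretely, for each surviving candidate triple I would compute the restricted root data: realize $\mathfrak h$ and $\mathfrak l$ explicitly (from the database embeddings, described by root vectors), extract $\mathfrak a_{\mathfrak h}$ and $\mathfrak a_{\mathfrak l}$ as subspaces of the real split Cartan, and test whether some Weyl group element carries a nonzero vector of $\mathfrak a_{\mathfrak l}$ into $\mathfrak a_{\mathfrak h}$. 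If it does, properness fails and the triple is discarded; the claim is that after the dimensional filter, this always happens. The algorithmic content of the paper is precisely the procedure that runs this test for all exceptional $G$.

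I expect the main obstacle to be twofold. First, the enumeration is only meaningful if the database of semisimple subalgebras is genuinely complete for the relevant rank/type, and if one correctly passes from semisimple subalgebras to reductive ones (adding central split tori, which enlarge $\mathfrak a_{\mathfrak h}$) and from complex subalgebras to their real forms compatible with the given real form of $G$; bookkeeping the real structure and the non-compact dimensions across this passage is delicate. Second, even after the dimensional filter there may remain a nontrivial number of triples for which the Weyl-group intersection test is not obvious by inspection, so the crux is to organize this test as a terminating algorithm — iterating over $W$ (or a transversal), reducing to a linear-algebra feasibility question over $\mathfrak a$ — and to verify that in every exceptional case the answer is "intersection nonzero", i.e. properness genuinely fails. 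In other words, the conceptual skeleton (dimension count $+$ Kobayashi's criterion) is standard; the real work, and the real risk, is in certifying the exhaustiveness and correctness of the computer search, which is why the statement is proved by the algorithms developed in the paper rather than by a closed-form argument.
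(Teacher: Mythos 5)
Your first step---the numerical sieve based on $d(H)+d(L)=d(G)$, the real-rank additivity $\operatorname{rank}_{\mathbb R}\mathfrak g=\operatorname{rank}_{\mathbb R}\mathfrak h+\operatorname{rank}_{\mathbb R}\mathfrak l$, and the Faccin--de Graaf databases---matches the paper's Algorithms 1 and 2, which produce a list of twelve surviving triples. (One small omission: you need Benoist--Labourie's theorem that the center of $H$ must be compact to justify restricting to \emph{semisimple} $\mathfrak h,\mathfrak l$; ``adding central split tori'' is exactly what that result lets you avoid.)

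The genuine gap is in your second step. You propose to ``realize $\mathfrak h$ and $\mathfrak l$ explicitly (from the database embeddings), extract $\mathfrak a_{\mathfrak h}$ and $\mathfrak a_{\mathfrak l}$ as subspaces of the real split Cartan, and test'' the Weyl-group condition. But the databases only give embeddings of the \emph{complexifications} $\mathfrak h^c\hookrightarrow\mathfrak g^c$; deciding whether a given real form $\mathfrak h$ actually embeds in the real form $\mathfrak g$, and how its split part then sits inside $\mathfrak a$, is an open and delicate problem which the paper explicitly states (as Problem 1) and deliberately does \emph{not} solve. You flag this as ``delicate bookkeeping,'' but it is not bookkeeping---it is the obstruction that forces a different strategy. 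The paper eliminates the twelve triples without ever constructing the real embeddings: cases 1--3 and 6 fall to the a-hyperbolic rank inequality $\operatorname{rank}_{\text{a-hyp}}\mathfrak h+\operatorname{rank}_{\text{a-hyp}}\mathfrak l\le\operatorname{rank}_{\text{a-hyp}}\mathfrak g$; cases 4, 5, 9--12 have $\mathfrak h\cong\mathfrak l$ with complexified embeddings that are equivalent (by Minchenko's theorem that linear equivalence implies equivalence outside an explicit exceptional list), whence a Satake-diagram argument produces a single real hyperbolic orbit meeting both copies, violating properness via Okuda's orbit-theoretic reformulation of Kobayashi's criterion; and cases 7--8 additionally invoke Tojo's classification of symmetric spaces with standard forms. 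So the conceptual skeleton you describe (sieve plus properness test) is right, but the properness test as you formulate it cannot be executed with the available data; the actual proof replaces it with conjugation-invariant criteria (a-hyperbolic rank, hyperbolic orbits matched against Satake diagrams) that see only the isomorphism type and the linear-equivalence class of the complexified embedding.
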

To prove Theorem \ref{tw1} we translate the Kobayashi criterion into a computer algorithm to find all possible triples $(\mathfrak{g},\mathfrak{h},\mathfrak{l})$ which may induce compact Clifford-Klein forms. Our approach is inspired by  algorithmic methods  of classification of subalgebras in simple Lie algebras developed by Faccin and de Graaf \cite{dg}, \cite{dg1}, \cite{sla}.  Eventually we eliminate each triple using known criteria of existence of compact Clifford-Klein forms. 

Recently, Tojo \cite{tojo} proved that irreducible symmetric spaces $G/H$ that admit standard Clifford-Klein forms are exactly those in the list \cite{ko}. Thus, there is an overlap with our results in this case.
\vskip6pt
\noindent {\bf Acknowledgment}. We thank the anonymous referees for valuable advice and for informing us about Tojo's paper \cite{tojo}. We are grateful to them for the careful reading and pointing out a wrong claim in the first version of this article. The first named author acknowledges the support of the National Science Center (grant NCN no. 2018/31/D/ST1/00083). The third named author was supported by NCN grant  2018/31/B/ST1/00053.

\section{Preliminaries}\label{sec:prelim}
We use the basics of Lie theory without explanations, the reader may consult \cite{ov}. We consider root systems of complex semisimple Lie algebras with respect to the Cartan subalgebras. We use the notation $\mathfrak{g}$ for {\it real} Lie algebras, while $\mathfrak{g}^c$ means a complex Lie algebra (or, the complexification of $\mathfrak{g}$, the context will be always clear).  If $\mathfrak{g}=\mathfrak{k}+\mathfrak{p}$ denotes the Cartan decomposition of a semisimple non-compact real Lie algebra, then there is a maximal $\mathbb{R}$-diagonalizable subalgebra $\mathfrak{a}\subset\mathfrak{p}$, and any two such subalgebras are transformed into each other by an element in $K,$ the maximal compact subgroup of $G$ (corresponding to $\mathfrak{k}$). Moreover, $\mathfrak{a}$ is a maximal $\mathbb{R}$-diagonalizable subalgebra in $\mathfrak{g}$, and all such subalgebras are conjugate. We set $\operatorname{rank}_{\mathbb{R}}\mathfrak{g}:= \dim\, \mathfrak{a},$ the real rank of $\mathfrak{g}.$ Let $N_K(\mathfrak{a})$ and $Z_K(\mathfrak{a})$ denote the normalizer and the centralizer of $\mathfrak{a}$ in $K$. The {\it little Weyl group} of $G$ is, by definition, the group $W=N_K(\mathfrak{a})/Z_K(\mathfrak{a})$.

 We will need the notion of the \textit{a-hyperbolic rank} \cite{bo}, \cite{btc} which will be used in the sieving procedure eliminating some possibilities for compact Clifford-Klein forms. Let  $\mathfrak{t}$ be a split Cartan subalgebra in $\mathfrak{g}$ containing $\mathfrak{a}$, and $\Sigma\subset\mathfrak{a}^*$ be a {\it restricted} root system of $\mathfrak{g}$. Choose a subset of positive roots $\Sigma^{+}\subset \Sigma.$ We have
$$\mathfrak{g}=\mathfrak{k}+\mathfrak{a}+\mathfrak{n}, \ \ \mathfrak{n}:= \sum_{\alpha\in\Sigma^{+}} \mathfrak{g}_{\alpha}.$$
The above decomposition is called the \textit{Iwasawa decomposition}. On the Lie group level we have
$$G=KAN.$$
where the subgroup $A$ is simply connected abelian, $N$ is simply connected unipotent and $A$ normalizes $N.$ Also $N$ is the maximal unipotent subgroup of $G,$ any unipotent subgroup of $G$ is conjugate to a subgroup of $N.$

Define 
 $$\mathfrak{a}^+=\{X\in\mathfrak{a}\,|\,\xi(X)\geq 0,\forall \xi\in\Sigma^+\}.$$
Consider the Weyl group $W_{\mathfrak{g}}$ of $\mathfrak{g}$ and choose the longest element $w_0\in W_{\mathfrak{g}}$. Define $-w_0:\mathfrak{a}\rightarrow\mathfrak{a}$ by the formula $X\rightarrow -(w_0X)$. It can be checked that $-w_{0} $ has the property $-w_{0}(\mathfrak{a}^+)\subset\mathfrak{a}^+$. Let 
 $$\mathfrak{b}^+=\{X\in\mathfrak{a}^+\,|\,-w_{0}(X)=X\}.$$
 \begin{definition} {\rm The dimension of $\mathfrak{b}^+$ (as a convex cone) is called the {\it a-hyperbolic rank} of $\mathfrak{g}$. It is denoted by}
 $$\operatorname{rank}_{\operatorname{a-hyp}}\mathfrak{g}.$$ 
 \end{definition}
 Here is an example which shows the way of using this invariant.
 \begin{theorem}[\cite{btc}, Theorem 8]\label{thm:a-hyp} Let $G$ be a connected and semisimple linear Lie group and $H$ a reductive subgroup with finite number of connected components. Then
 \begin{itemize}
 \item if $\operatorname{rank}_{\operatorname{a-hyp}}\mathfrak{g}=\operatorname{rank}_{\operatorname{a-hyp}}\mathfrak{h}$, then $G/H$ does not admit discontinuous actions of non virtually abelian discrete subgroups (and, therefore, compact Clifford-Klein forms),
 \item If $\operatorname{rank}_{\operatorname{a-hyp}}\mathfrak{g}>\operatorname{rank}_{\mathbb{R}}\mathfrak{h}$, then $G/H$ admits a discontinuous action of a non virtually abelian discrete subgroup.
 \end{itemize}
 \end{theorem}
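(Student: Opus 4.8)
The plan is to deduce both assertions from Kobayashi's properness criterion (\cite{kob}) together with the asymptotic behaviour of the Cartan (polar) projection. Write $G=K\,(\exp\overline{\mathfrak a^+})\,K$ and let $\mu\colon G\to\overline{\mathfrak a^+}$ be the associated Cartan projection; then a discrete $\Gamma\subset G$ acts properly discontinuously on $G/H$ if and only if $\{\gamma\in\Gamma:\mu(\gamma)\in\mu(H)+C\}$ is finite for every compact $C\subset\mathfrak a$. Two facts drive the argument: $\mu(g^{-1})=-w_0\,\mu(g)$, and, for a reductive $H$ with finitely many components (whose maximal $\mathbb R$-split Cartan we place inside $\mathfrak a$), $\mu(H)$ stays within bounded Hausdorff distance of the cone $\mathcal C_H:=\big(W_{\mathfrak g}\cdot\overline{\mathfrak a_{\mathfrak h}^+}\big)\cap\overline{\mathfrak a^+}$, a finite union of convex cones of dimension at most $\operatorname{rank}_{\mathbb R}\mathfrak h$; in particular $\mathcal C_H=-w_0(\mathcal C_H)$ since $H=H^{-1}$. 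The contact with the a-hyperbolic rank is the observation that $X\in\mathfrak b^+$ exactly when $\mu(\exp tX)=|t|\,X$ for all $t\in\mathbb R$; equivalently, if $S\subseteq G$ is a rank-one semisimple subgroup then the order-two little Weyl group of $S$ carries $\exp X$ to $\exp(-X)$ for $X$ spanning $\mathfrak a_{\mathfrak s}$, so the dominant representative $X_S:=\mu(\exp X)$ is automatically $(-w_0)$-fixed, i.e.\ $X_S\in\overline{\mathfrak b^+}$, and $\mu(S)$ lies in a bounded neighbourhood of the ray $\mathbb R_{\ge0}X_S$. This is precisely why $\mathfrak b^+$ is cut out by $-w_0(X)=X$.

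\emph{Existence (second bullet).} Assume $\operatorname{rank}_{\operatorname{a-hyp}}\mathfrak g>\operatorname{rank}_{\mathbb R}\mathfrak h$. Then $\dim\mathfrak b^+>\dim\mathcal C_H$, so $\mathcal C_H\cap\mathfrak b^+$ is nowhere dense in $\mathfrak b^+$ and one can choose $X$ in the relative interior of $\mathfrak b^+$ with $\mathbb R_{\ge0}X\cap\mathcal C_H=\{0\}$. I would then realise a suitable positive multiple of $X$ as the dominant split-Cartan direction of a rank-one semisimple subgroup $S\subseteq G$ (the image of an $\mathfrak{sl}_2$-triple in $\mathfrak g$, or of a copy of $\mathrm{SO}(n,1)$, nudging $X$ inside the open cone just produced if necessary). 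Since $\mu(S)$ sits in a bounded neighbourhood of $\mathbb R_{\ge0}X$ and this ray misses $\mathcal C_H$, Kobayashi's criterion gives that $S$, hence every Schottky (free, non virtually abelian) subgroup of $S$, acts properly discontinuously on $G/H$. This reorganises around $\mathfrak b^+$ the mechanism by which Okuda \cite{ok} produces proper $\mathrm{SL}(2,\mathbb R)$-actions.

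\emph{Obstruction (first bullet).} Suppose a non virtually abelian discrete $\Gamma\subset G$ acts properly discontinuously on $G/H$. By the Tits alternative and known reductions (\cite{ok}) one obtains a rank-one semisimple subgroup $S\subseteq G$ and a Schottky subgroup $\Sigma\le S$, Zariski dense in $S$, still acting properly discontinuously on $G/H$. By the remark above its dominant split-Cartan direction $X_S$ lies in $\overline{\mathfrak b^+}$, while the limit cone of $\Sigma$ is the ray $\mathbb R_{\ge0}X_S$; properness forces $\mathbb R_{\ge0}X_S\cap\mathcal C_H=\{0\}$, for otherwise $\mu(\Sigma)$ would remain within bounded distance of $\mu(H)$, contradicting finiteness in Kobayashi's criterion. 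Hence $\overline{\mathfrak b^+}\not\subseteq\mathcal C_H$. On the other hand $\mathcal C_H$ always contains the $W_{\mathfrak g}$-dominant image of the a-hyperbolic cone of $\mathfrak h$, a subcone of $\overline{\mathfrak b^+}$ of dimension $\operatorname{rank}_{\operatorname{a-hyp}}\mathfrak h$. The heart of the proof is to show that, when $\operatorname{rank}_{\operatorname{a-hyp}}\mathfrak h=\operatorname{rank}_{\operatorname{a-hyp}}\mathfrak g$, this already forces $\overline{\mathfrak b^+}\subseteq\mathcal C_H$, contradicting the previous sentence.

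\emph{Where the difficulty lies.} The decisive step is this last combinatorial implication: equality of a-hyperbolic ranks must be shown to make $\mu(H)$ occupy \emph{every} $(-w_0)$-self-opposite dominant direction of $\mathfrak g$. This is a question about how the embedding $\mathfrak a_{\mathfrak h}\hookrightarrow\mathfrak a_{\mathfrak g}$ interacts with the two little Weyl groups and the longest elements $w_0^{\mathfrak h}$, $w_0^{\mathfrak g}$, and carrying it out uniformly rather than case by case over reductive pairs $(\mathfrak g,\mathfrak h)$ is the real work. A secondary technical point, in the existence half, is to be certain that the self-opposite direction chosen in $\operatorname{int}\mathfrak b^+$ is genuinely realised by a rank-one \emph{semisimple} subgroup of $G$: the semisimple elements of $\mathfrak{sl}_2$-triples occupy only finitely many directions, so one must either use the larger supply of rank-one subgroups inside $G$ or verify that an appropriate integral direction falls in the open cone produced by the dimension count.
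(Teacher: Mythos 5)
The paper does not prove this statement; it is imported verbatim from \cite{btc} (Theorem 8), so there is no internal proof to compare with and I can only judge your argument on its own terms. Your general framework is the right one --- the Cartan projection $\mu$, the cone $\mathcal{C}_H=(W\cdot\overline{\mathfrak a_{\mathfrak h}^+})\cap\overline{\mathfrak a^+}$ at bounded Hausdorff distance from $\mu(H)$, and the identification of $\mathfrak b^+$ with the dominant directions along which $\mu(\exp tX)=|t|X$ --- and this is indeed the Benoist--Kobayashi machinery behind the cited result. But both halves contain genuine gaps, only some of which you flag. In the obstruction half, the opening reduction is not available: the Tits alternative gives a free subgroup of $G$, not a Schottky subgroup of a \emph{rank-one semisimple subgroup} $S\subseteq G$, and a free (or otherwise non virtually abelian) discrete subgroup acting properly need not lie, even virtually, in any proper semisimple subgroup of $G$. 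The correct substitute is Benoist's limit cone $\ell_\Gamma\subseteq\overline{\mathfrak a^+}$ of $\Gamma$ itself, which is $(-w_0)$-stable because $\Gamma=\Gamma^{-1}$ and which must meet $\mathfrak b^+$ nontrivially when $\Gamma$ is not virtually abelian; no rank-one subgroup appears. More seriously, the step you yourself call the heart of the proof --- that $\operatorname{rank}_{\operatorname{a-hyp}}\mathfrak h=\operatorname{rank}_{\operatorname{a-hyp}}\mathfrak g$ forces $\overline{\mathfrak b^+}\subseteq\mathcal{C}_H$ --- is asserted, not proved, and the dimension count you offer cannot prove it: a closed convex subcone of $\mathfrak b^+$ of full dimension need not exhaust $\mathfrak b^+$, so exhibiting inside $\mathcal{C}_H$ a subcone of $\overline{\mathfrak b^+}$ of dimension $\operatorname{rank}_{\operatorname{a-hyp}}\mathfrak h$ yields nothing when the two ranks agree. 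What is actually needed is a structural statement about how $\mathfrak b^+\cap W\mathfrak a_{\mathfrak h}$ covers $\mathfrak b^+$, and that is precisely the content of \cite{btc}.

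In the existence half, the dimension count correctly produces a ray $\mathbb R_{\ge0}X\subseteq\operatorname{int}\mathfrak b^+$ with $\mathbb R_{\ge0}X\cap\mathcal{C}_H=\{0\}$, but the plan to realise (a perturbation of) $X$ as the split-Cartan direction of a rank-one semisimple subgroup fails for the reason you half-acknowledge: up to conjugacy there are only finitely many semisimple subalgebras of $\mathfrak g$, hence only finitely many candidate rays in $\overline{\mathfrak a^+}$ arise this way (for $\mathfrak{sl}_2$-triples these are the finitely many weighted Dynkin diagrams of nilpotent orbits, with weights in $\{0,1,2\}$), and none of these finitely many rays need avoid $\mathcal{C}_H$ merely because $\dim\mathfrak b^+>\dim\mathcal{C}_H$. "Nudging $X$ inside the open cone" therefore does not help. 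The standard completion is not Theorem \ref{twkob} applied to a reductive $L$ but Benoist's ping-pong construction of a nonabelian free Schottky subgroup of $G$ itself, built from loxodromic elements whose Jordan projections cluster near $X$, so that the limit cone of the resulting group is a small $(-w_0)$-stable neighbourhood of $\mathbb R_{\ge0}X$ disjoint from $\mathcal{C}_H$ away from the origin; this is also why the second bullet asserts only the existence of a discontinuous action of a non virtually abelian \emph{discrete} subgroup rather than a proper action of a reductive one. With these two substitutions (limit cones in place of rank-one subgroups, and the covering lemma for $\mathfrak b^+$ actually proved) your outline would become the proof in \cite{btc}; as written, it is a correct scaffolding with the two decisive steps missing.
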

 Note that calculations of $\operatorname{rank}_{\operatorname{a-hyp}}\mathfrak{g}$ are done in \cite{btc}. 

 If $G$ is a connected linear real Lie group with the semisimple real Lie algebra $\mathfrak{g}$, the symbol $G^c$ denotes the connected complex Lie group corresponding to $\mathfrak{g}^c$ and such that $G\subset G^c.$ We say that an element $X \in \mathfrak{g}^c$ is {\it hyperbolic}, if $\operatorname{ad}_{X}$ is diagonalizable and all eigenvalues of $\operatorname{ad}_{X}$ are real.
\begin{definition}
{\rm An adjoint orbit $O_{X}:=\operatorname{Ad}_{{G}^c}(X) \subset \mathfrak{g}^c$  is said to be hyperbolic if $X$ is hyperbolic. We use the same terminology for the the orbit of $\operatorname{Ad}_G$ in $\mathfrak{g}$. } 
\end{definition}  
 In this paper we work with semisimple subalgebras in simple real and complex Lie algebras and our approach is based on methods of computer aided classification of embeddings of Lie algebras \cite{dg}, \cite{dg1},\cite{sla}. This classification is considered up to equivalence or linear equivalence.
 \begin{definition}[\cite{dg}, \cite{dg1}] {\rm Let $\mathfrak{g}$ and $\tilde{\mathfrak{g}}$ be two Lie algebras. Two embeddings $\varepsilon:\mathfrak{g}^c\hookrightarrow\tilde{\mathfrak{g}}^c,$ $\varepsilon ':\mathfrak{g}^c\hookrightarrow\tilde{\mathfrak{g}}^c$ are called {\it equivalent} if there is an inner automorphism $\phi\in\operatorname{Aut}(\tilde{\mathfrak{g}}^c)$ such that $\varepsilon=\phi\varepsilon'$. They are called {\it linearly equivalent} if for all representations $\rho:\tilde{\mathfrak{g}}^c\rightarrow \mathfrak{gl}(V^c)$ the induced representations $\rho\circ\varepsilon$ and $\rho\circ\varepsilon'$ are equivalent.}
 \end{definition}
 The equivalence implies linear equivalence, but the converse is false (see, for example, \cite{min}). However,  it is mentioned in \cite{dg} that the coincidence of these classes is ubiquitous if $\tilde{\mathfrak{g}}^{c}$ is an exceptional simple Lie algebra and $\mathfrak{g}^c$ is semisimple. 
 
    In Section \ref{sec:proof} we consider {\it symmetric pairs} $(\mathfrak{g},\mathfrak{h})$, that is, when $\mathfrak{h}$ is the subalgebra of all fixed points of an involutive automorphism of $\mathfrak{g}$. They correspond to pseudo-Riemannian symmetric spaces.

\section{Kobayashi's criterion for proper actions and sieving algorithms}\label{sec:sewing}
\subsection{A criterion for properness}
Let $G$ be a simple linear real Lie group and $H,L\subset G$ reductive subgroups of $G.$ Choose  a Cartan decomposition
\begin{equation}
 \mathfrak{g}=\mathfrak{k} + \mathfrak{p},
 \label{eq1}
\end{equation}
where $\mathfrak{k}$ is the Lie algebra of the maximal compact subgroup $K$ of $G.$
By assumption, $\mathfrak{h}$ and $\mathfrak{l}$ are reductive subalgebras of $\mathfrak{g}$, therefore, they admit Cartan decompositions 
$$\mathfrak{h}=\mathfrak{k}_{h} + \mathfrak{p}_{h} \ \text{and} \ \mathfrak{l}=\mathfrak{k}_{l} + \mathfrak{p}_{l},$$
such that $\mathfrak{p}_{h}, \mathfrak{p}_{l} \subset \mathfrak{p}$ (see \cite{kob} for more details). One can choose  maximal abelian subalgebras
$$\mathfrak{a} \subset \mathfrak{p}, \ \ \mathfrak{a}_{h} \subset \mathfrak{p}_{h}, \ \ \textrm{and} \ \ \mathfrak{a}_{l} \subset \mathfrak{p}_{l}$$
such that $\mathfrak{a}_{h},\mathfrak{a}_{l} \subset \mathfrak{a}.$
Let us recall the definition of the proper group action. Let $S$ be a locally compact topological group acting continuously on a locally compact Hausdorff topological space $X$. This action is {\it proper} if for every compact subset $S \subset X$ the set
$$L_{S}:=\{  g\in L \ | \ g\cdot S \cap S \neq \emptyset \}$$
is compact. Kobayashi proved the following criterion for the properness of a Lie group action.
\begin{theorem}[\cite{kob}, Theorem 4.1] The following three conditions are equivalent
 \begin{itemize}
   \item $L$ acts on $G/H$ properly.
   \item $H$ acts on $G/L$ properly.
   \item For any $w \in W$ (where $W$ denotes the little Weyl group of $\mathfrak{g}$ )
   $$(w\cdot \mathfrak{a}_{l}) \cap \mathfrak{a}_{h} =\{ 0 \}.$$ 
 \end{itemize}
 \label{twkob}
\end{theorem}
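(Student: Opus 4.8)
The plan is to route everything through the Cartan (polar) decomposition $G=K\exp(\mathfrak a^{+})K$ and the associated Cartan projection $\mu\colon G\to\mathfrak a^{+}$, a continuous proper map that is bi-$K$-invariant. Call two subsets $A,B\subset G$ \emph{proper}, written $A\pitchfork B$, if $A\cap CBC$ is relatively compact for every compact $C\subset G$. Unwinding the definition of a proper action shows at once that ``$L$ acts properly on $G/H$'' $\iff$ $L\pitchfork H$, and the relation $\pitchfork$ is visibly symmetric, so this is also equivalent to ``$H$ acts properly on $G/L$''; this already handles the first two bullets, and the substance is in matching them with the third.

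The first, and I expect the hardest, step is to convert $\pitchfork$ into a statement about $\mu$: using that $\mu$ is proper and that $\mu(CgC)$ stays within a bounded neighbourhood of $\mu(g)$, uniformly in $g$, for each compact $C$ (a compactness argument built on the coarse bi-$K$-invariance of $\mu$), one obtains
$$L\pitchfork H\quad\Longleftrightarrow\quad \mu(L)\cap B_{R}\!\bigl(\mu(H)\bigr)\ \text{is bounded for every}\ R>0,$$
where $B_{R}$ is the closed $R$-neighbourhood in $\mathfrak a$. This is Kobayashi's properness lemma and is the one genuinely analytic ingredient; everything afterwards is convex geometry in the Weyl chamber.

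Next I would compute $\mu$ on the reductive subgroups. Since $\mathfrak h$ is reductive with a Cartan decomposition compatible with that of $\mathfrak g$, one has the polar decomposition $H=K_{H}\exp(\mathfrak a_{h}^{+})K_{H}$ with $K_{H}\subset K$ and $\mathfrak a_{h}\subset\mathfrak a$; bi-$K$-invariance of $\mu$ together with the fact that $\exp(X)\in K\exp\!\bigl(p(X)\bigr)K$ for $X\in\mathfrak a$, where $p(X)$ is the dominant point of the orbit $W\!\cdot\!X$, then yields
$$\mu(H)=\mathfrak a^{+}\cap(W\!\cdot\!\mathfrak a_{h}),\qquad \mu(L)=\mathfrak a^{+}\cap(W\!\cdot\!\mathfrak a_{l}).$$
(The little Weyl group of $H$ is represented inside $K_{H}\subset K$, which is what lets one sweep $\mathfrak a_{h}^{+}$ out to all of $\mathfrak a_{h}$ without changing $\mu$.) In particular $\mu(H)$ and $\mu(L)$ are finite unions of closed polyhedral cones.

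Finally, the combinatorial reduction. For finite unions of closed cones $C_{1},C_{2}$, the set $C_{1}\cap B_{R}(C_{2})$ is bounded for all $R$ precisely when $C_{1}\cap C_{2}=\{0\}$ (two closed cones lie at bounded Hausdorff distance only if they coincide, and a closed cone meets a bounded neighbourhood of another closed cone in a bounded set exactly when the two intersect only at the origin). Applied to the cones above, properness becomes
$$\mathfrak a^{+}\cap\bigl(w_{1}\!\cdot\!\mathfrak a_{l}\bigr)\cap\bigl(w_{2}\!\cdot\!\mathfrak a_{h}\bigr)=\{0\}\qquad\text{for all }w_{1},w_{2}\in W.$$
Using that $\mathfrak a_{l}$ and $\mathfrak a_{h}$ are linear subspaces and that every point of $\mathfrak a$ is $W$-conjugate into $\mathfrak a^{+}$, one checks this is equivalent to $(w\!\cdot\!\mathfrak a_{l})\cap\mathfrak a_{h}=\{0\}$ for all $w\in W$, which is the third bullet: given the third bullet and $X\in\mathfrak a^{+}\cap w_{1}\mathfrak a_{l}\cap w_{2}\mathfrak a_{h}$, the element $w_{2}^{-1}X$ lies in $(w_{2}^{-1}w_{1}\mathfrak a_{l})\cap\mathfrak a_{h}=\{0\}$; conversely, any nonzero $X\in(w\mathfrak a_{l})\cap\mathfrak a_{h}$ can be conjugated by some $v\in W$ into $\mathfrak a^{+}$, whereupon $vX$ violates the displayed condition with $w_{1}=vw$, $w_{2}=v$. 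Since the third condition is symmetric under swapping $\mathfrak a_{l}$ and $\mathfrak a_{h}$ (replace $w$ by $w^{-1}$), this closes the cycle of equivalences among all three statements.
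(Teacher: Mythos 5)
The paper offers no proof of this statement: it is quoted as Theorem 4.1 of \cite{kob}, so there is nothing internal to compare against. Your argument is, in essence, Kobayashi's original one (the Cartan-projection reformulation due to Kobayashi and Benoist), and it is correct as a sketch. The one place where the write-up is genuinely one-sided is the properness lemma: to obtain the equivalence between $L\pitchfork H$ and boundedness of $\mu(L)\cap B_R(\mu(H))$ for all $R$, you need, besides the stated bound $\mu(CgC)\subset B_{R(C)}(\mu(g))$ (which gives ``bounded $\Rightarrow$ proper''), the converse estimate that $|\mu(g)-\mu(g')|\le R$ forces $g'\in C_R\,g\,C_R$ for a compact set $C_R$ depending only on $R$; this follows from writing $g'=k_1'\exp\bigl(\mu(g')-\mu(g)\bigr)k_1^{-1}\cdot g\cdot k_2^{-1}k_2'$ with all $k$'s in $K$, and it is what converts unboundedness of $\mu(L)\cap B_R(\mu(H))$ into failure of properness. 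The remaining steps --- the symmetry of $\pitchfork$, the computation $\mu(H)=\mathfrak a^{+}\cap(W\cdot\mathfrak a_{h})$ via $H=K_H\exp(\mathfrak a_h^{+})K_H$ and the realization of the little Weyl group of $H$ inside $K_H$, the asymptotic-cone argument for closed cones, and the final Weyl-group bookkeeping --- are all correct.
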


\noindent
Let 
$$d(G):=\dim\,\mathfrak{p},\,d(H):=\dim\,\mathfrak{p}_{h},\,  d(L):=\dim\,\mathfrak{p}_{l}.$$

\begin{corollary}[\cite{kob}, Theorem 4.7]
If a triple $(G,H,L)$ induces a standard compact Clifford-Klein form then 
\begin{equation}
d(G)=d(H)+d(L).
\label{eq2}
\end{equation}
Conversely, if $L$ acts properly on $G/H$ and ($\ref{eq2}$) is satisfied, then $(G,H,L)$ determines a standard compact Clifford-Klein form. Moreover for any $g\in G$ we have
$$gHg^{-1}\cap L \subset K',$$
where $K'\subset G$ is a conjugate of $K.$
\label{wnkob}
\end{corollary}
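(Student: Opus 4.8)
In either situation $L$ acts properly on $G/H$, so the last assertion is immediate: for $g\in G$ the stabiliser of $gH\in G/H$ under $L$ is $L\cap gHg^{-1}$, a compact subgroup of $G$ because the action is proper, hence contained in a conjugate $K'$ of $K$ (all maximal compact subgroups are conjugate and every compact subgroup lies in one). So the substance is the equality (\ref{eq2}) together with the compactness of $L\backslash G/H$, and I would treat both on the same topological footing. Since the Cartan decompositions were chosen compatibly (so $\mathfrak{p}_h\subset\mathfrak{p}$, $\mathfrak{k}_h\subset\mathfrak{k}$, and likewise for $\mathfrak{l}$), $G/H$ is $G$-equivariantly diffeomorphic to the total space of the vector bundle $K\times_{K_H}(\mathfrak{p}/\mathfrak{p}_h)\to K/K_H$ with $K_H:=K\cap H$; in particular $G/H$ deformation retracts onto the compact manifold $K/K_H$, of dimension
\[
c:=\dim K-\dim K_H=(\dim G-d(G))-(\dim H-d(H))=N-\big(d(G)-d(H)\big),
\]
where $N:=\dim(G/H)=\dim G-\dim H$. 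Fix a torsion-free cocompact lattice $\Gamma\subset L$ (such lattices exist since $L$ is reductive and linear) and set $X:=L/K_L$, a contractible manifold of dimension $d(L)$ on which $\Gamma$ acts freely, so $B\Gamma:=\Gamma\backslash X$ is a closed aspherical $d(L)$-manifold. As $L$ acts properly on $G/H$, the discrete group $\Gamma$ acts properly discontinuously on $G/H$, and being torsion-free it acts freely, so $M:=\Gamma\backslash(G/H)$ is a manifold of dimension $N$. The Borel construction $E:=\Gamma\backslash\big((G/H)\times X\big)$ carries two fibre-bundle projections: $E\to B\Gamma$ with fibre $G/H\simeq K/K_H$, and $E\to M$ with contractible fibre $X$; the second exhibits a homotopy equivalence $E\simeq M$.

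I would then run the Leray--Serre spectral sequence of $G/H\simeq K/K_H\to E\to B\Gamma$ with $\mathbb{Z}/2$-coefficients. Since base and fibre are closed manifolds of dimensions $d(L)$ and $c$, one has $E_2^{p,q}=0$ unless $0\le p\le d(L)$ and $0\le q\le c$, and the corner $E_2^{d(L),c}=H^{d(L)}\big(B\Gamma;\,H^c(K/K_H;\mathbb{Z}/2)\big)\cong\mathbb{Z}/2$: the coefficient system $H^c(K/K_H;\mathbb{Z}/2)\cong\mathbb{Z}/2$ is trivial because $\operatorname{Aut}(\mathbb{Z}/2)$ is trivial, and $H^{d(L)}$ of a closed $d(L)$-manifold is $\mathbb{Z}/2$ by mod $2$ Poincar\'{e} duality. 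No differential can enter or leave this corner, so it survives to $E_\infty$; hence the top nonzero $\mathbb{Z}/2$-cohomology of $M\simeq E$ sits exactly in degree $d(L)+c$. If $(G,H,L)$ induces a standard compact Clifford-Klein form, then $M=\Gamma\backslash(G/H)$ is in addition a closed $N$-manifold, so its top nonzero $\mathbb{Z}/2$-cohomology is in degree $N$; comparing, $N=d(L)+c$, which after substituting the value of $c$ rearranges to $d(G)=d(H)+d(L)$. This proves the first assertion.

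For the converse, assume $L$ acts properly on $G/H$ and $d(G)=d(H)+d(L)$, equivalently $d(L)+c=N=\dim M$. Running the \emph{homology} Leray--Serre spectral sequence of $G/H\to E\to B\Gamma$ with $\mathbb{Z}/2$-coefficients, the fundamental-class corner $E^2_{d(L),c}=H_{d(L)}\big(B\Gamma;\,H_c(K/K_H;\mathbb{Z}/2)\big)\cong\mathbb{Z}/2$ again survives for degree reasons, whence $H_N(M;\mathbb{Z}/2)=H_N(E;\mathbb{Z}/2)\neq 0$. But $M$ is a connected $N$-manifold, and mod $2$ Poincar\'{e} duality gives $H_N(M;\mathbb{Z}/2)\cong H^0_c(M;\mathbb{Z}/2)$, which is nonzero precisely when $M$ is compact. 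Hence $M=\Gamma\backslash(G/H)$ is compact, and since $\Gamma$ is cocompact in $L$ acting properly, $L\backslash G/H$ is compact too; combined with properness and the stabiliser statement, $(G,H,L)$ determines a standard compact Clifford-Klein form.

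The routine ingredients are the retraction $G/H\simeq K/K_H$ (standard for reductive $H$ with compatible Cartan decompositions), the two homotopy equivalences coming from the Borel construction, and the triviality of the mod $2$ monodromy. The load-bearing step, and the one I would expect to be the real obstacle, is the last one in the converse: turning the purely homotopical output ``$H_N(M;\mathbb{Z}/2)\neq 0$'' into the geometric conclusion that $M$ is compact via Poincar\'{e} duality --- this is exactly where the numerical hypothesis $d(G)=d(H)+d(L)$ (morally: ``$L$ is as non-compact as the fibre direction of $G/H$'') gets converted into cocompactness. The point to handle with care is to keep the entire argument over the \emph{closed} base $B\Gamma$, so that the spectral sequence truncates at $p=d(L)$, $q=c$ and Poincar\'{e} duality is available on $M$; this is the reason for replacing $L$ by a torsion-free cocompact lattice $\Gamma$.
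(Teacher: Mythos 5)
Your argument is correct, and it is essentially Kobayashi's original proof of this statement: the paper itself gives no proof (it cites \cite{kob}, Theorem 4.7), and Kobayashi's argument there is exactly the cohomological-dimension computation you carry out --- the Mostow fibration $G/H\simeq K/(K\cap H)$, passage to a torsion-free cocompact lattice $\Gamma\subset L$, the Borel construction over the closed aspherical $d(L)$-manifold $\Gamma\backslash L/K_L$, and detection of (co)compactness of $\Gamma\backslash G/H$ via the top mod~$2$ (co)homology corner of the spectral sequence. The properness-implies-compact-stabilizer observation for the final inclusion $gHg^{-1}\cap L\subset K'$ is likewise the standard one.
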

Kobayashi's  criterion can be reformulated in the language of hyperbolic orbits \cite{ok}. This  will be used in Section 4.
\begin{proposition}[\cite{ok}, Theorem 4.1]
Let $H,L$ be reductive subgroups of a semisimple real Lie group $G$. The subgroup $L$ does not act properly on $G/H$ if and only if there exists a non-trivial hyperbolic orbit $Ad_{G}(X)$ meeting $\mathfrak{h}$ and $\mathfrak{l} .$
\label{kobhyp}
\end{proposition}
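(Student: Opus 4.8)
The plan is to derive the Proposition from Kobayashi's properness criterion, Theorem~\ref{twkob} (which holds verbatim when $G$ is semisimple), by translating the Weyl-group condition there into the language of hyperbolic orbits. Recall the standing set-up of Section~\ref{sec:sewing}: we fix compatible Cartan decompositions $\mathfrak{h}=\mathfrak{k}_h+\mathfrak{p}_h$ and $\mathfrak{l}=\mathfrak{k}_l+\mathfrak{p}_l$ with $\mathfrak{p}_h,\mathfrak{p}_l\subset\mathfrak{p}$, and maximal abelian subalgebras $\mathfrak{a}_h\subset\mathfrak{p}_h$, $\mathfrak{a}_l\subset\mathfrak{p}_l$ with $\mathfrak{a}_h,\mathfrak{a}_l\subset\mathfrak{a}\subset\mathfrak{p}$.

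First I would record two standard facts about hyperbolic elements of a real semisimple Lie algebra. \textbf{(i)} An element $X\in\mathfrak{g}$ is hyperbolic if and only if it is $\operatorname{Ad}_G$-conjugate into $\mathfrak{a}$, and in that case the hyperbolic orbit $\operatorname{Ad}_G(X)$ meets $\mathfrak{a}$ in a single $W$-orbit (equivalently, it meets the closed chamber $\mathfrak{a}^+$ in exactly one point). Here every $Y\in\mathfrak{a}$ is hyperbolic because $\operatorname{ad}_Y$ is diagonal on the restricted root space decomposition with real eigenvalues $\alpha(Y)$; conversely $\exp(\mathbb{R}X)$ lies in a maximal $\mathbb{R}$-split torus, all of which are $G$-conjugate to $A$; and uniqueness up to $W$ is the conjugacy of Weyl chambers. \textbf{(ii)} If $X$ lies in a reductive subalgebra $\mathfrak{h}\subset\mathfrak{g}$, then $X$ is hyperbolic in $\mathfrak{g}$ if and only if it is hyperbolic in $\mathfrak{h}$: the subspace $\mathfrak{h}$ is $\operatorname{ad}_X$-invariant, so $\operatorname{ad}_X|_{\mathfrak{h}}$ is the restriction of a diagonalizable operator to an invariant subspace, hence diagonalizable, with eigenvalues among those of $\operatorname{ad}_X$ on $\mathfrak{g}$, hence real; and if $X$ is hyperbolic in $\mathfrak{h}$ then, by (i) applied to $\mathfrak{h}$, it is $\operatorname{Ad}_H$-conjugate into $\mathfrak{a}_h\subset\mathfrak{a}\subset\mathfrak{p}$, so it is hyperbolic in $\mathfrak{g}$.

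Combining (i) and (ii) gives the key reduction: \emph{a hyperbolic $\operatorname{Ad}_G$-orbit $O$ meets $\mathfrak{h}$ if and only if it meets $\mathfrak{a}_h$} (and likewise for $\mathfrak{l}$, $\mathfrak{a}_l$). Indeed, if $Z\in O\cap\mathfrak{h}$ then $Z$ is hyperbolic in $\mathfrak{h}$ by (ii), hence $\operatorname{Ad}_H$-conjugate to some $Y_h\in\mathfrak{a}_h$ by (i) for $\mathfrak{h}$; then $Y_h\in\operatorname{Ad}_H(Z)\subset O$, so $Y_h\in O\cap\mathfrak{a}_h$. The converse is trivial since $\mathfrak{a}_h\subset\mathfrak{h}$. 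Hence a non-trivial hyperbolic orbit meeting both $\mathfrak{h}$ and $\mathfrak{l}$ exists exactly when a single hyperbolic orbit contains a nonzero $Y_h\in\mathfrak{a}_h$ and some $Y_l\in\mathfrak{a}_l$; by (i) this happens iff $Y_h$ and $Y_l$ lie in a common $W$-orbit, and then $Y_h\ne 0$ forces $Y_l\ne 0$ as $W$ acts linearly. Thus the existence of such an orbit is equivalent to $(w\cdot\mathfrak{a}_l)\cap\mathfrak{a}_h\ne\{0\}$ for some $w\in W$, which by Theorem~\ref{twkob} is precisely the failure of properness of the $L$-action on $G/H$. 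This is the assertion of the Proposition.

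I expect the only genuinely non-formal ingredient to be fact (i) — that hyperbolicity is equivalent to $\operatorname{Ad}_G$-conjugacy into $\mathfrak{a}$, together with the uniqueness of the representative in $\mathfrak{a}^+$. This rests on the structure theory of real reductive groups (conjugacy of maximal $\mathbb{R}$-split tori and of Weyl chambers), which I would invoke rather than reprove. Everything else — the behaviour of diagonalizable operators under restriction to invariant subspaces, and the bookkeeping with the compatible Cartan decompositions and with $W$ acting on $\mathfrak{a}$ via $N_K(\mathfrak{a})/Z_K(\mathfrak{a})$ — is elementary.
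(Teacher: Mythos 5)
Your argument is correct and is essentially the derivation given in the cited source: the paper states Proposition \ref{kobhyp} without proof, quoting Okuda (\cite{ok}, Theorem 4.1), and Okuda obtains it from Kobayashi's criterion (Theorem \ref{twkob}) by exactly the reduction you describe --- a hyperbolic $\operatorname{Ad}_G$-orbit meets the reductive subalgebra $\mathfrak{h}$ iff it meets $\mathfrak{a}_{h}$, and two elements of $\mathfrak{a}$ lie on the same $\operatorname{Ad}_G$-orbit iff they are conjugate under the little Weyl group $W$. No discrepancy to report.
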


\noindent
We also need the following result.
\begin{theorem}[\cite{bela}, Corollary 3, see also \cite{kob-d}]\label{thm:labourie}
If $G/H$ admits a compact Clifford-Klein form then the center of $H$ is compact.
\end{theorem}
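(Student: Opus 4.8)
Write the Cartan decomposition $\mathfrak{h}=\mathfrak{k}_h+\mathfrak{p}_h$ with $\mathfrak{p}_h\subset\mathfrak{p}$. Since $\mathfrak{h}$ is reductive, its center splits as $\mathfrak{z}(\mathfrak{h})=(\mathfrak{z}(\mathfrak{h})\cap\mathfrak{k}_h)\oplus(\mathfrak{z}(\mathfrak{h})\cap\mathfrak{p}_h)$, where the first summand generates a compact (toral) part of $Z(H)^0$ and the second generates its $\mathbb{R}$-split part; hence $Z(H)$ is compact if and only if $\mathfrak{z}(\mathfrak{h})\cap\mathfrak{p}_h=0$. The plan is to assume, for contradiction, that there is a nonzero $X\in\mathfrak{z}(\mathfrak{h})\cap\mathfrak{p}_h$ and to contradict the existence of a compact Clifford--Klein form. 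After conjugation I may take $X\in\mathfrak{a}_h\subset\mathfrak{a}$, so $X$ is hyperbolic and $A_X:=\exp(\mathbb{R}X)$ is a non-compact, $\mathbb{R}$-split one-parameter subgroup lying in the \emph{center} of $H$.

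\textbf{Two structural inputs.} First, centrality of $X$ gives $\mathfrak{h}\subset\mathfrak{z}_{\mathfrak{g}}(X)=\mathfrak{l}$, the $\theta$-stable Levi subalgebra centralizing the hyperbolic element $X$. Consequently the closed cone $\mathcal{C}_H:=\mu(H)\subset\overline{\mathfrak{a}^+}$, where $\mu:G\to\overline{\mathfrak{a}^+}$ is the Cartan projection, is contained in the proper subcone $\mu(L)$ and contains both the ray $\mathbb{R}_{\ge0}\,\mu(X)$ and the Cartan image of $A_X$ in the opposite direction; put differently, $\mathcal{C}_H$ is degenerate along the $W_H$-fixed vector $X$. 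Second, I would use the Cartan-projection form of Kobayashi's properness criterion (the discrete refinement of Theorem~\ref{twkob} underlying Proposition~\ref{kobhyp}): for discrete $\Gamma\subset G$ and reductive $H$, the group $\Gamma$ acts properly on $G/H$ if and only if $\mu(\Gamma)$ is separated from $\mathcal{C}_H$, i.e.\ for every $R>0$ the set $\{\gamma\in\Gamma:\operatorname{dist}(\mu(\gamma),\mathcal{C}_H)\le R\}$ is finite.

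\textbf{The contradiction, and the main obstacle.} Cocompactness of $\Gamma\backslash G/H$ yields a compact $C\subset G$ with $G=\Gamma\,C\,H$, so $\overline{\mathfrak{a}^+}=\mu(G)$ is coarsely covered by $\mu(\Gamma)$ translated by $\mathcal{C}_H$: every point of $\overline{\mathfrak{a}^+}$ lies within bounded distance of $\mu(\gamma)+\mathcal{C}_H$ for some $\gamma$. The heart of the matter is to show that these two demands---\emph{separation} of $\mu(\Gamma)$ from $\mathcal{C}_H$ and \emph{coarse covering} of $\overline{\mathfrak{a}^+}$ by $\mu(\Gamma)+\mathcal{C}_H$---are incompatible once $\mathcal{C}_H$ contains the central split direction $\mathbb{R}_{\ge0}\,\mu(X)$. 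Concretely I would fix the hyperplane transverse to $X$, analyze the growth of $\mu(\Gamma)$ against the slab around $\mathcal{C}_H$ in the $X$-direction, and argue that covering forces infinitely many $\gamma$ into a bounded tube about $\mathcal{C}_H$, violating properness. Making this ``separation versus covering'' dichotomy quantitative---controlling the coarse-additivity defect of $\mu$ and the precise shape of $\mathcal{C}_H$ near the central ray---is the step I expect to be the main obstacle; once it is in place, the nonexistence of $X$, hence the compactness of $Z(H)$, follows. A cleaner-looking alternative is dynamical: freeness and properness make $\Gamma\backslash G\to\Gamma\backslash G/H$ a principal $H$-bundle over the compact base, and the central $X$ induces a proper, measure-preserving vertical $\mathbb{R}$-flow (the right $A_X$-action); the obstruction would then take the form of a Poincar\'e-recurrence contradiction on a finite-measure transversal, and the difficulty migrates to constructing that transversal.
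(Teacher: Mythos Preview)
The paper does not prove this statement at all; it is quoted from \cite{bela} (with a parallel reference to \cite{kob-d}) and used only as a black box in the proof of Proposition~\ref{pro1}. So there is no in-paper argument to compare your proposal against, and what follows is an assessment of your sketch on its own terms.

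Both routes you outline have genuine gaps beyond the ones you flag. In the Cartan-projection approach, from $G=\Gamma C H$ one obtains that for every $g$ there exists $\gamma$ with $\mu(\gamma^{-1}g)$ within a fixed distance of $\mathcal C_H$; this is \emph{not} the same as $\mu(g)\in\mu(\gamma)+\mathcal C_H$ up to bounded error, and turning it into ``infinitely many $\mu(\gamma)$ fall in a tube around $\mathcal C_H$'' requires control of $\mu$ on products that you do not have. The special feature ``$\mathcal C_H$ contains the ray $\mathbb{R}_{\ge 0}\mu(X)$'' does not by itself create tension: $\mathcal C_H$ always contains many rays, and nothing prevents $\mu(\Gamma)$ from covering coarsely while staying far from any fixed one. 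The dynamical alternative has a sharper problem. The right $A_X$-action on $\Gamma\backslash G$ is proper and Haar-preserving, but $\Gamma\backslash G$ has \emph{infinite} measure (the $H$-fibres are non-compact), so Poincar\'e recurrence does not apply; a proper $\mathbb{R}$-flow on an infinite-measure space with a finite-measure transversal is perfectly consistent---translation on $\mathbb{R}\times K$ with $K$ compact is exactly this situation. And on the compact base $\Gamma\backslash G/H$ the induced flow is \emph{trivial}, since $A_X\subset H$. The actual arguments in \cite{bela} and \cite{kob-d} bring in additional structure (a cohomological/volume obstruction on the compact quotient in the first case, Kobayashi's inequality on non-compact dimensions along centralizers in the second) that neither of your sketches reaches; some such extra mechanism is needed to close the argument.
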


\noindent
We will use the following easy corollary to the previous results.
\begin{proposition}
Assume that for any non-compact semisimple subgroup $H'\subset G$ the non-compact homogeneous space $G/H'$ does not admit a standard compact Clifford-Klein form. Then for any non-compact reductive subgroup $H\subset G$ the non-compact homogeneous space $G/H$ does not admit standard compact Clifford-Klein forms.
\label{pro1}
\end{proposition}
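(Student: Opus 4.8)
The plan is to reduce the reductive case to the semisimple case by splitting off the center. Let $H \subset G$ be a non-compact reductive subgroup with $G/H$ non-compact, and write $\mathfrak{h} = \mathfrak{z}(\mathfrak{h}) \oplus \mathfrak{h}_{ss}$, where $\mathfrak{z}(\mathfrak{h})$ is the center and $\mathfrak{h}_{ss} = [\mathfrak{h},\mathfrak{h}]$ is the semisimple part. Suppose, for contradiction, that $G/H$ admits a standard compact Clifford-Klein form, induced by a reductive subgroup $L \subset G$. By Theorem \ref{thm:labourie} the center of $H$ is compact, so $\mathfrak{z}(\mathfrak{h})$ is a compact torus and in particular $\mathfrak{z}(\mathfrak{h}) \subset \mathfrak{k}_h \subset \mathfrak{k}$; it contributes nothing to $\mathfrak{p}_h$, hence nothing to the maximal abelian subalgebra $\mathfrak{a}_h \subset \mathfrak{p}_h$. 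Consequently $\mathfrak{a}_h$ is also a maximal abelian subalgebra in a Cartan-complementary subspace of $\mathfrak{h}_{ss}$, so $d(H) = d(H_{ss})$ and the little-Weyl-group condition of Theorem \ref{twkob} for the pair $(H, L)$ is literally the same as the one for $(H_{ss}, L)$.

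First I would note that $H_{ss}$ is non-compact: since $\mathfrak{z}(\mathfrak{h})$ is compact, if $\mathfrak{h}_{ss}$ were compact then $\mathfrak{h}$ would be compact, contrary to assumption. Next, $G/H_{ss}$ is non-compact — indeed $\dim H_{ss} \le \dim H < \dim G$, so $\dim(G/H_{ss}) > 0$, and a positive-dimensional homogeneous space $G/H_{ss}$ of a simple Lie group with $H_{ss}$ reductive is non-compact (alternatively, $d(H_{ss}) = d(H) < d(G)$ already rules out compactness of $G/H_{ss}$ via the standard dimension count). Then, since $L$ acts properly on $G/H$, the equivalent Weyl-group condition shows $L$ acts properly on $G/H_{ss}$ as well. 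Finally, from Corollary \ref{wnkob} applied to the original triple we have $d(G) = d(H) + d(L) = d(H_{ss}) + d(L)$, and properness of the $L$-action on $G/H_{ss}$ together with this dimension equality shows, again by Corollary \ref{wnkob}, that $(G, H_{ss}, L)$ determines a standard compact Clifford-Klein form of $G/H_{ss}$. This contradicts the hypothesis, since $H_{ss}$ is a non-compact semisimple subgroup of $G$.

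The only genuinely delicate point is the verification that passing to the semisimple part does not change the relevant data, i.e. that $\mathfrak{a}_h$ may be chosen inside $\mathfrak{p}_{h_{ss}}$ and that $d(H) = d(H_{ss})$; this is exactly where Theorem \ref{thm:labourie} is used, as it guarantees $\mathfrak{z}(\mathfrak{h})$ is compact and hence sits in the maximal compact part. Everything else is bookkeeping with the Cartan decomposition and the two characterizations in Theorem \ref{twkob} and Corollary \ref{wnkob}. I would also remark that one should take $H_{ss}$ with finitely many connected components (or replace it by the identity component together with the relevant finite group), so that the cited theorems apply verbatim; this is harmless because $H$ itself is assumed to have finitely many components throughout the paper.
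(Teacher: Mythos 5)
Your proposal is correct and follows essentially the same route as the paper: invoke Theorem \ref{thm:labourie} to see that the center of $H$ is compact, so passing to the semisimple part changes neither $\mathfrak{a}_h$ nor $d(H)$, and then Theorem \ref{twkob} and Corollary \ref{wnkob} transfer the standard form to $(G,H_{ss},L)$, contradicting the hypothesis. The only cosmetic difference is that the paper simultaneously replaces $L$ by its semisimple part as well (which it needs later), whereas you keep $L$ reductive; both are valid.
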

\begin{proof}
Assume that $G/H$ admits a standard compact Clifford-Klein form for some reductive subgroup $H\subset G$ so there exists a triple $(G,H,L).$ We see that $G/L$ and $G/H$ admit (standard) compact Clifford-Klein forms. Thus the centers of $H$ and $L$ are compact by Theorem \ref{thm:labourie}. Let $H'\subset H$ and $L'\subset L$ be the semisimple parts of $H$ and $L,$ respectively. We have
$$d(H)=d(H'), \ \ \ d(L)=d(L')$$
and so $(G,H',L')$ induces a standard compact Clifford-Klein form.
\end{proof}

\subsection{Algorithms}
The algorithms proposed in this work use the numerical characteristics of the triple $(\mathfrak{g},\mathfrak{h},\mathfrak{l})$ which are a consequence of Proposition \ref{pro1}, Theorem \ref{twkob} and Corollary \ref{wnkob}.
\begin{proposition}\label{prop:numeric-inv}
Assume that $(G,H,L)$ determines a standard compact Clifford-Klein form. Then the following restrictions on the triple $(\mathfrak{g},\mathfrak{h},\mathfrak{l})$ hold:
\begin{enumerate}
  \item By Proposition \ref{pro1} and Corollary \ref{wnkob} we may assume that $\mathfrak{h}$, and $\mathfrak{l}$ are semisimple, and without compact simple ideals,
	\item $\operatorname{rank}_{\mathbb{R}}\mathfrak{g}=\operatorname{rank}_{\mathbb{R}}\mathfrak{h}+\operatorname{rank}_{\mathbb{R}}\mathfrak{l}$, 
	\item $\dim\,\mathfrak{p}=\dim\,\mathfrak{p}_{h}+\dim\,\mathfrak{p}_{l}$,
	\item $\dim\,\mathfrak{k}>\dim\,\mathfrak{k}_{h}, \ \dim\,\mathfrak{k}>\dim\,\mathfrak{k}_{l},$ since $\mathfrak{h},\mathfrak{l}\subset \mathfrak{g}$.
\end{enumerate}
\end{proposition}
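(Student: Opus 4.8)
My plan is to verify the four assertions one at a time, each being a short deduction from the results quoted above. I first record two facts used repeatedly: $H$ is non-compact by hypothesis, and $L$ is non-compact as well, for otherwise $L\backslash G/H$ could not be compact while $G/H$ is non-compact. Item 3 is just the direct implication in Corollary \ref{wnkob}: a standard compact Clifford-Klein form for $(G,H,L)$ forces $d(G)=d(H)+d(L)$, i.e.\ $\dim\mathfrak{p}=\dim\mathfrak{p}_h+\dim\mathfrak{p}_l$. For item 1, Proposition \ref{pro1} already reduces us to the case where $\mathfrak{h}$ and $\mathfrak{l}$ are semisimple, so only the compact simple ideals have to be removed. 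Write $\mathfrak{h}=\mathfrak{h}_0\oplus\mathfrak{h}_1$ with $\mathfrak{h}_0$ the sum of the compact simple ideals and $\mathfrak{h}_1$ the sum of the non-compact ones (so $\mathfrak{h}_1\neq 0$ since $H$ is non-compact), and let $H_1$ be the connected, hence closed, reductive subgroup with Lie algebra $\mathfrak{h}_1$. Because $\mathfrak{h}_0$ is compact, $\mathfrak{p}_{h_0}=0$, so $\mathfrak{p}_{h}=\mathfrak{p}_{h_1}$ and we may take $\mathfrak{a}_{h_1}=\mathfrak{a}_h$; hence the third condition of Theorem \ref{twkob} for $(G,H_1,L)$ is word for word the one for $(G,H,L)$, so $L$ still acts properly on $G/H_1$, while $d(H_1)=d(H)$ gives $d(G)=d(H_1)+d(L)$. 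By the converse part of Corollary \ref{wnkob}, $(G,H_1,L)$ is again a standard compact Clifford-Klein form, and $G/H_1$ is non-compact since $d(G)-d(H_1)=d(L)>0$. The same substitution applied to $\mathfrak{l}$ completes item 1.

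For item 4, the inclusions $\mathfrak{k}_h=\mathfrak{h}\cap\mathfrak{k}\subseteq\mathfrak{k}$ and $\mathfrak{k}_l=\mathfrak{l}\cap\mathfrak{k}\subseteq\mathfrak{k}$ give the two inequalities at once, so the content is strictness. Suppose $\mathfrak{k}\subseteq\mathfrak{h}$. For $X\in\mathfrak{h}$, writing $X=X_{\mathfrak{k}}+X_{\mathfrak{p}}$ with $X_{\mathfrak{k}}\in\mathfrak{k}$ and $X_{\mathfrak{p}}\in\mathfrak{p}$, we get $X_{\mathfrak{p}}=X-X_{\mathfrak{k}}\in\mathfrak{h}$; thus $\mathfrak{h}=\mathfrak{k}\oplus(\mathfrak{h}\cap\mathfrak{p})$, and $\mathfrak{h}\cap\mathfrak{p}$ is a $\mathfrak{k}$-submodule of $\mathfrak{p}$ (as $[\mathfrak{k},\mathfrak{h}\cap\mathfrak{p}]\subseteq\mathfrak{h}\cap\mathfrak{p}$). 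Since $G$ is simple, the isotropy representation of $\mathfrak{k}$ on $\mathfrak{p}$ is $\mathbb{R}$-irreducible, so $\mathfrak{h}\cap\mathfrak{p}$ is either $0$, whence $\mathfrak{h}=\mathfrak{k}$ is compact, contradicting that $H$ is non-compact, or all of $\mathfrak{p}$, whence $\mathfrak{h}=\mathfrak{g}$, contradicting that $G/H$ is non-compact. The same argument excludes $\mathfrak{k}\subseteq\mathfrak{l}$, using that $L$ is non-compact and that $\mathfrak{l}\neq\mathfrak{g}$, the latter because $d(L)=d(G)-d(H)<d(G)$ by item 3 together with $d(H)>0$.

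Item 2 is the subtle one. One inequality is immediate: taking $w=\operatorname{id}\in W$ in Theorem \ref{twkob} gives $\mathfrak{a}_h\cap\mathfrak{a}_l=\{0\}$, and since $\mathfrak{a}_h,\mathfrak{a}_l\subseteq\mathfrak{a}$ this yields $\operatorname{rank}_{\mathbb{R}}\mathfrak{h}+\operatorname{rank}_{\mathbb{R}}\mathfrak{l}\le\operatorname{rank}_{\mathbb{R}}\mathfrak{g}$. I expect the reverse inequality to be the real obstacle: it is exactly here that the cocompactness of $L\backslash G/H$, and not merely properness, must be used. The route I would take is to write $G=LCH$ for a compact set $C$ and pass to the Cartan projection $\mu\colon G\to\mathfrak{a}^+$; using the standard facts that $\mu(H)$ and $\mu(L)$ lie within bounded distance of finite unions of Weyl translates of a chamber in $\mathfrak{a}_h$, respectively $\mathfrak{a}_l$, and that $\mu$ is perturbed only boundedly by the compact factor, one gets that $\mathfrak{a}^+=\mu(G)$ lies in a bounded neighbourhood of a finite union of linear subspaces of dimension at most $\operatorname{rank}_{\mathbb{R}}\mathfrak{h}+\operatorname{rank}_{\mathbb{R}}\mathfrak{l}$. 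Since $\mathfrak{a}^+$ is a cone of dimension $\operatorname{rank}_{\mathbb{R}}\mathfrak{g}$, this forces $\operatorname{rank}_{\mathbb{R}}\mathfrak{g}\le\operatorname{rank}_{\mathbb{R}}\mathfrak{h}+\operatorname{rank}_{\mathbb{R}}\mathfrak{l}$, and with the first inequality we obtain the equality. (Alternatively, this rank identity may be quoted directly from Kobayashi's list of necessary conditions for cocompact standard Clifford-Klein forms.) Making the comparison of Cartan projections precise is the one genuine technical point; items 1, 3 and 4 are routine bookkeeping on top of Theorem \ref{twkob} and Corollary \ref{wnkob}.
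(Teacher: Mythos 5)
Items 1, 3 and 4 of your argument are fine; in fact for item 4 you supply more detail than the paper does (the paper simply asserts strictness from $\mathfrak{h},\mathfrak{l}\subsetneq\mathfrak{g}$, while your irreducibility argument for the isotropy representation of $\mathfrak{k}$ on $\mathfrak{p}$ makes it precise). The genuine gap is in the hard inequality of item 2, $\operatorname{rank}_{\mathbb{R}}\mathfrak{g}\le\operatorname{rank}_{\mathbb{R}}\mathfrak{h}+\operatorname{rank}_{\mathbb{R}}\mathfrak{l}$, which is exactly the step you defer. Your Cartan-projection route does not close as stated: from $G=LCH$ and the Cartan decompositions $L=K_L\exp(\mathfrak{a}_l^+)K_L$, $H=K_H\exp(\mathfrak{a}_h^+)K_H$ one is reduced to controlling $\mu(e^X c\, e^Y)$ for $X\in\mathfrak{a}_l$, $Y\in\mathfrak{a}_h$ and $c$ in a compact set, and the available general estimates (Benoist's $\|\mu(g_1g_2)-\mu(g_1)\|\le\|\mu(g_2)\|$, or $\mu(e^Xke^Y)\in X+\operatorname{conv}(W\cdot Y)$) only localize $\mu(e^Xce^Y)$ in a set of the form $X+\operatorname{conv}(W\cdot Y)+O(1)$. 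Since $\operatorname{conv}(W\cdot Y)$ is a full-dimensional polytope of diameter comparable to $\|Y\|$ once $Y$ is regular, the union of these sets is not contained in a bounded neighbourhood of finitely many subspaces of dimension $\operatorname{rank}_{\mathbb{R}}\mathfrak{h}+\operatorname{rank}_{\mathbb{R}}\mathfrak{l}$; the "sum of unions of Weyl translates" bound you invoke is simply not one of the standard facts. Nor can you just cite the identity: Kobayashi's Theorem 4.7 gives $d(G)=d(H)+d(L)$, not the rank equality, and the paper has to prove the latter.

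The paper's own proof is much more elementary and derives item 2 \emph{from} item 3. One has $\dim\mathfrak{p}=\dim\mathfrak{a}+\dim\mathfrak{n}$ from the Iwasawa decomposition $\mathfrak{g}=\mathfrak{k}+\mathfrak{a}+\mathfrak{n}$, and likewise for $\mathfrak{h}$ and $\mathfrak{l}$. If $\operatorname{rank}_{\mathbb{R}}\mathfrak{g}>\operatorname{rank}_{\mathbb{R}}\mathfrak{h}+\operatorname{rank}_{\mathbb{R}}\mathfrak{l}$, then subtracting these identities from $d(G)=d(H)+d(L)$ gives $\dim\mathfrak{n}<\dim\mathfrak{n}_h+\dim\mathfrak{n}_l$. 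After conjugating $H$ and $L$ so that $\mathfrak{n}_h,\mathfrak{n}_l\subset\mathfrak{n}$ (which changes neither properness nor cocompactness), the dimension count forces $\mathfrak{n}_0:=\mathfrak{n}_h\cap\mathfrak{n}_l\neq\{0\}$, and the corresponding connected unipotent subgroup $N_0$ is a common non-compact subgroup of (conjugates of) $H$ and $L$, contradicting properness via Theorem \ref{twkob}. So the cocompactness enters only through Corollary \ref{wnkob}, and no analysis of Cartan projections is needed. If you want to keep your structure, replace the Cartan-projection paragraph by this Iwasawa dimension count; everything else you wrote can stand.
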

\begin{proof}
The condition 2 follows from Theorem \ref{twkob}. We have $\textrm{rank}_{\mathbb{R}}(\mathfrak{g})\geq\textrm{rank}_{\mathbb{R}}(\mathfrak{h})+\textrm{rank}_{\mathbb{R}}(\mathfrak{l})$, so assume that $L$ acts properly and co-compactly on  $G/H$ and 
$$\textrm{rank}_{\mathbb{R}}\mathfrak{g}>\textrm{rank}_{\mathbb{R}}\mathfrak{h}+\textrm{rank}_{\mathbb{R}}\mathfrak{l}.$$
Let $\mathfrak{g}=\mathfrak{k}+\mathfrak{a}+\mathfrak{n},$ $\mathfrak{h}=\mathfrak{k}_{h}+\mathfrak{a}_{h}+\mathfrak{n}_{h},$ $\mathfrak{h}=\mathfrak{k}_{l}+\mathfrak{a}_{l}+\mathfrak{n}_{l}$ be the Iwasawa decompositions of $\mathfrak{g},$ $\mathfrak{h},$ $\mathfrak{l},$ respectively. Since $d(G)=\dim\,\mathfrak{a}+ \dim\,\mathfrak{n}$ we obtain
$$\dim\,\mathfrak{n}-\dim\,\mathfrak{n}_{h}-\dim\,\mathfrak{n}_{l}<0.$$
After conjugating $H$ and $L$ we have $\mathfrak{n}_{h}, \mathfrak{n}_{l}\subset \mathfrak{n}.$ Thus there exists a non-trivial subalgebra $\mathfrak{n}_{0}:=\mathfrak{n}_{h}\cap \mathfrak{n}_{l}.$ Let $N_{0}\subset N$ be a connected (non-compact) subgroup corresponding to $\mathfrak{n}_{0}.$ Since $N_{0}\subset H,L$ is non-compact,  $L$ can not act properly on $G/H.$ A contradiction.
\end{proof}
Thus, the first step of our computer aided analysis is based on the following plan.
\begin{enumerate}
\item  If $(G,H,L)$ is a triple which yields a standard Clifford-Klein form, then the restrictions 1)-4) of Proposition \ref{prop:numeric-inv} hold.
\item Ideally we should begin with describing all possible triples $(\mathfrak{g},\mathfrak{h},\mathfrak{l})$, where $\mathfrak{g}$ is a simple real  Lie algebra, and $\mathfrak{h},\mathfrak{l}$ are semisimple subalgebras. It is too complicated to do it in full generality, however, one can begin with an easier task of finding all triples $(\mathfrak{g}^c,\mathfrak{h}^c,\mathfrak{l}^c)$. This is known since the work of Dynkin, and one can use the database \cite{dg1}.
\item Thus, one proceeds as follows: writes down all possible triples $(\mathfrak{g},\mathfrak{h},\mathfrak{l})$, where each of the real algebras is a real form of the corresponding complex Lie algebra. Note that it may happen, that {\it there is no true embedding of $\mathfrak{h}\hookrightarrow\mathfrak{g}$, or $\mathfrak{l}\hookrightarrow\mathfrak{g}$}. This is because in general, if $\varepsilon:\mathfrak{g}^c\hookrightarrow \tilde{\mathfrak{g}}^c$ is a complex embedding of arbitrary Lie algebras $\mathfrak{g}^{c}$ and $\tilde{\mathfrak{g}}^{c}$, it may happen that $\varepsilon(\mathfrak{g})\not\subset\tilde{\mathfrak{g}}$.
\item For each of the triples $(\mathfrak{g},\mathfrak{h},\mathfrak{l})$ obtained in Part 3) of our plan, one checks conditions 1)-4) of Proposition \ref{prop:numeric-inv}.
\end{enumerate}    
Using this list we create the following algorithms to obtain the list of possible triples $(\mathfrak{g},\mathfrak{h},\mathfrak{l})$ (note again that at this stage we do not check if $\mathfrak{h},\mathfrak{l}$ can be realized as subalgebras of $\mathfrak{g}$).
\vskip10pt
\begin{algorithm}[H]
  \caption{\tt PotentialSubalgebras($\mathfrak{g}$)}
  \footnotesize
  \label{alg1}
  \tcc{
  $\mathfrak{g}$ - non-compact real exceptional simple Lie algebra. 
   Return list $L$ of "potential" semisimple subalgebras of $\mathfrak{g}$.}
  \Begin{
  let $H1C$ be a set of all semisimple subalgebras of $\mathfrak{g}^c$ (use SLA plugin \cite{sla}) \;
  set $H2:= \emptyset$, $H:=\emptyset$\;
  \ForAll{$\mathfrak{h}^c \in H1C$}{
    add all real forms of $\mathfrak{h}^c$ to $H2$\;
    }
    set $\mathfrak{k}$ and $\mathfrak{p}$ from Cartan decomposition $\mathfrak{g}=\mathfrak{k}+\mathfrak{p}$\;
  \ForAll{$\mathfrak{h} \in H2$}{
  set $\mathfrak{k}_{\mathfrak{h}}$ and $\mathfrak{p}_{\mathfrak{h}}$ from Cartan decomposition $\mathfrak{h}=\mathfrak{k}_{\mathfrak{h}}+\mathfrak{p}_{\mathfrak{h}}$\;
  \If{$\operatorname{rank}_{\mathbb{R}}\mathfrak{h}<\operatorname{rank}_{\mathbb{R}}\mathfrak{g}$  {\bf and} $\dim \mathfrak{k}_{\mathfrak{h}} <\dim \mathfrak{k}$  {\bf and} $\dim \mathfrak{p}_{\mathfrak{h}} <\dim \mathfrak{p}$ }{add $\mathfrak{h}$ to $H$\;} 
    }
   \Return{$H$}\;
  
  }
\end{algorithm}

\begin{algorithm}[H]
  \caption{\tt PotentialSubalgebraPairs($\mathfrak{g}$)}
  \footnotesize
  \label{alg2}
  \tcc{
  $\mathfrak{g}$ - non-compact real exceptional simple Lie algebra. 
   Return list $HL$ of "potential" semisimple subalgebra pairs ($\mathfrak{h}$,$\mathfrak{l}$).}
  \Begin{
  set $HL:=\emptyset$\;
  set $H:=\mathtt{PotentialSubalgebras} (\mathfrak{g})$, $L:=\mathtt{PotentialSubalgebras} (\mathfrak{g})$\;
  set $\mathfrak{p}$ from Cartan decomposition $\mathfrak{g}=\mathfrak{k}+\mathfrak{p}$\;
  \ForAll{$\mathfrak{h} \in H$}{
    \ForAll{$\mathfrak{l} \in L$}{
    set  $\mathfrak{p}_{\mathfrak{h}}$ from Cartan decomposition $\mathfrak{h}=\mathfrak{k}_{\mathfrak{h}}+\mathfrak{p}_{\mathfrak{h}}$\;
 set  $\mathfrak{p}_{\mathfrak{l}}$ from Cartan decomposition $\mathfrak{l}=\mathfrak{k}_{\mathfrak{l}}+\mathfrak{p}_{\mathfrak{l}}$\;
 \If{$\operatorname{rank}_{\mathbb{R}}\mathfrak{h}+\operatorname{rank}_{\mathbb{R}}\mathfrak{l}=\operatorname{rank}_{\mathbb{R}}\mathfrak{g}$ {\bf and} $\dim \mathfrak{p}_{\mathfrak{h}}+\dim \mathfrak{p}_{\mathfrak{l}}=\dim\mathfrak{p}$ {\bf and} $\dim \mathfrak{p}_{\mathfrak{h}} \leqslant\dim \mathfrak{p}_{\mathfrak{l}}$}{add pair $(\mathfrak{h},\mathfrak{l})$ to $HL$\;}   
    }
 
    }

   \Return{$HL$}\;
  
  }
\end{algorithm}

\vskip10pt
Using these algorithms we obtain Table \ref{tt1} of possible triples $(\mathfrak{g},\mathfrak{h},\mathfrak{l})$ which might yield standard compact Clifford-Klein forms, because they do not violate the restrictions of Proposition \ref{prop:numeric-inv}.
The algorithm also lists the number of linear equivalence classes of embeddings of $\mathfrak{h}$ into $\mathfrak{g}$ and $\mathfrak{l}$ into $\mathfrak{g} .$ This way we obtain the following proposition
\begin{proposition}
For each subalgebra $\mathfrak{h}^c$ or $\mathfrak{l}^c $ (given as the complexifications of the subalgebras $\mathfrak{h},$ $\mathfrak{l}$ listed in Table \ref{tt1}) there is exactly one, up to linear equivalence, embedding of this subalgebra into $\mathfrak{g}^c $ (given as the complexifications of the corresponding algebra $\mathfrak{g}$ listed in Table \ref{tt1}).
\label{propemb}
\end{proposition}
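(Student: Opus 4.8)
The plan is to verify Proposition \ref{propemb} entry by entry, reducing it to a finite check against the classification of semisimple subalgebras of the exceptional complex simple Lie algebras. First I would recall that this classification, due to Dynkin and made effective by Faccin and de Graaf, is implemented in the \texttt{SLA} package \cite{sla} together with the database \cite{dg1}: for a fixed exceptional $\mathfrak{g}^c$ and a fixed semisimple $\mathfrak{s}^c$ it returns a complete, irredundant list of representatives of the linear equivalence classes of embeddings $\mathfrak{s}^c \hookrightarrow \mathfrak{g}^c$. Since Table \ref{tt1} has finitely many rows and each $\mathfrak{h}^c$, $\mathfrak{l}^c$ occurring there is a concrete semisimple Lie algebra, the assertion is exactly the output already produced inside Algorithm \ref{alg2}, which for every candidate subalgebra records the number of such classes; the content of the proposition is the observation that in each case this number equals one.

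To make the verification transparent rather than a black box, I would split the subalgebras appearing in Table \ref{tt1} into the regular ones and the $S$-subalgebras. For a regular semisimple $\mathfrak{s}^c$ — one conjugate to a subalgebra generated by root subspaces — the Borel--de Siebenthal procedure applied to the extended Dynkin diagram of $\mathfrak{g}^c$, together with the transitivity of the Weyl group on the relevant configurations, yields a single conjugacy class, and conjugacy implies linear equivalence; here the numerical constraints of Proposition \ref{prop:numeric-inv} are helpful, since they force $\mathfrak{h}$ and $\mathfrak{l}$ to be large (their real ranks and non-compact dimensions must add up to those of $\mathfrak{g}$), which excludes the low-rank subalgebras for which many inequivalent embeddings exist. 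For the $S$-subalgebras one consults Dynkin's tables of $S$-subalgebras of $G_2, F_4, E_6, E_7, E_8$, in which each of the semisimple algebras relevant to us occurs at most once; combining the two cases gives at most one linear equivalence class, while existence is automatic because the subalgebra does occur in $\mathfrak{g}^c$. The remark in \cite{dg} that equivalence and linear equivalence essentially coincide in this setting is reassuring but not logically needed here: we are counting linear equivalence classes and exhibit a single representative, so no finer distinction intervenes.

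The main obstacle is not mathematical depth but the reliability of the bookkeeping: one must be sure that the list returned by \texttt{SLA} and the database is complete (no embedding missed) and that each row of Table \ref{tt1} has been matched to the correct database entry, with consistent normalization of the Cartan subalgebra and labeling of the simple ideals of $\mathfrak{h}^c$ and $\mathfrak{l}^c$. I would guard against this by cross-checking each count against Dynkin's original tables and against the branching data of the adjoint representation of $\mathfrak{g}^c$ (which the database also stores and which strongly constrains the possible embeddings), so that the ``exactly one'' conclusion rests on two independent sources rather than on a single computation.
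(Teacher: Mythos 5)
Your proposal is correct and takes essentially the same route as the paper: the paper likewise reads off the count of linear equivalence classes from the output of Algorithm \ref{alg2} (via the SLA database) and, in the remark following the proposition, observes that a proof can be given by consulting Table 25 of \cite{dyn}, which lists all simple subalgebras of the exceptional Lie algebras up to linear equivalence. (Your blanket claim that the Borel--de Siebenthal procedure always yields a single conjugacy class of regular subalgebras is an overstatement in general, but it is harmless here, since the decisive step in both arguments is the finite comparison with Dynkin's tables for the specific large subalgebras occurring in Table \ref{tt1}.)
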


\begin{remark}
A proof of Proposition \ref{propemb} can be given using Table 25 in \cite{dyn}. The Table contains all the simple subalgebras, up to linear equivalence, of the Lie algebras
of exceptional type.
\end{remark}

\begin{table}[h]
\begin{center}

\begin{tabular}{|c|c|c|c|} \hline \hline &
$\mathfrak{g}$ &  $\mathfrak{h}$ &$\mathfrak{l}$\\\hline \hline
1 & $\mathfrak{e}_{6(6)}$& $\mathfrak{so}(2,7)$ &  $\mathfrak{f}_{4(4)}$ \\\hline
2 & $\mathfrak{e}_{6(6)}$ & $\mathfrak{so}(3,7)$ & $\mathfrak{so}(3,7)$ \\\hline
3 & $\mathfrak{e}_{6(6)}$ & $\mathfrak{su}^{\star}(6)$ & $\mathfrak{f}_{4(4)}$\\\hline
4 & $\mathfrak{e}_{6(2)}$& $\mathfrak{so}^{\star}(10)$ & $\mathfrak{so}^{\star}(10)$\\\hline
5 & $\mathfrak{e}_{6(-14)}$	 & $\mathfrak{f}_{4(-20)}$ &  $\mathfrak{f}_{4(-20)}$ \\\hline
6 & $\mathfrak{e}_{6(-26)}$	& $\mathfrak{su}(1,5)$ &  $\mathfrak{f}_{4(-20)}$\\\hline
7 & $\mathfrak{e}_{7(7)}$	& $\mathfrak{su}(3,5)$ &  $\mathfrak{e}_{6(2)}$ \\\hline								
8 & $\mathfrak{e}_{7(7)}$	& $\mathfrak{so}^{\star}(12)$  & $\mathfrak{e}_{6(2)}$ \\\hline
9 & $\mathfrak{e}_{7(-5)}$& $\mathfrak{e}_{6(-14)}$ &  $\mathfrak{e}_{6(-14)}$\\\hline
10 & $\mathfrak{e}_{8(8)}$ &  $\mathfrak{e}_{7(-5)}$ &  $\mathfrak{e}_{7(-5)}$\\\hline
11 & $\mathfrak{f}_{4(4)}$ & $\mathfrak{so}(2,7)$ &  $\mathfrak{so}(2,7)$ \\\hline							
12 & $\mathfrak{g}_{2(2)}$ &  $\mathfrak{su}(1,2)$ & $\mathfrak{su}(1,2)$	\\\hline	\hline
\end{tabular}
\end{center}

\caption{The list of triples $(\mathfrak{g},\mathfrak{h},\mathfrak{l})$ from Algorithm \ref{alg2}. We do not list triples for which $\mathfrak{h}$ or $\mathfrak{l}$ has non-trivial compact ideals.}
\label{tt1}
\end{table}

\subsection{Implementation of algorithms}
We have implemented Algorithm \ref{alg1} and \ref{alg2} in the computer algebra system GAP \cite{gap}. We have also created a special plugin CKForms \cite{ckforms} which uses the following plugins: SLA \cite{sla}, CoReLG \cite{corelg}.

\section{Analyzing Table 1 and the second step of proof of Theorem \ref{tw1}}
 \subsection{The problem} Now we describe the second ingredient of proof of Theorem \ref{tw1}. We see that one has to deal with two issues: 
 \begin{itemize}
 \item is  a triple $(\mathfrak{g},\mathfrak{h},\mathfrak{l})$ of semisimple Lie algebras represented by true monomorphisms of real Lie algebras $\mathfrak{h}\hookrightarrow\mathfrak{g}$ and $\mathfrak{l}\hookrightarrow\mathfrak{g}$?
 \item If yes, does it yield a compact Clifford-Klein form?
 \end{itemize} 
 The description of semisimple subalgebras in complex simple Lie algebras (up to linear equivalence) was done by Dynkin \cite{dyn}. The problem of embeddings of semisimple real Lie algebra into a simple real Lie algebra is more subtle. One may notice the following. If $\mathfrak{g}$ is a real form of $\mathfrak{g}^c$, then for any $\varphi\in\operatorname{Aut}(\mathfrak{g}^c)$ the subalgebra $\varphi(\mathfrak{g})$ is also a real form. Therefore the problem of describing the embeddings $\mathfrak{g}\hookrightarrow \tilde{\mathfrak{g}}$ can be formulated as follows \cite{dg}.
\begin{problem}\label{prob:emb} Let $\varepsilon:\mathfrak{g}^c\hookrightarrow\tilde{\mathfrak{g}}^c$ be an embedding of complex semisimple Lie algebras $\mathfrak{g}^c$ and $\tilde{\mathfrak{g}}^c$, and $\tilde{\mathfrak{g}}_1,...,\tilde{\mathfrak{g}}_l$ be the real forms of $\tilde{\mathfrak{g}}^{c}$ (considered up to conjugacy by some element in $\operatorname{Aut}(\mathfrak{g}^c)$). Let $\mathfrak{g}$ be a real form of $\mathfrak{g}^c$. Find all $i$ such that $\phi(\varepsilon(\mathfrak{g}))\subset\tilde{\mathfrak{g}}_i$ for some $\phi\in\operatorname{Aut}(\mathfrak{g}^c).$
\end{problem}
Thus, we see that if we knew how to solve Problem \ref{prob:emb}, we could apply this method to each of the pair coming from the triples in Table 1, and get the table of triples which indeed may yield Clifford-Klein forms. It is conceivable that this rout might yield important contributions to the entire topic. However, in our case it is sufficient to directly apply  Proposition \ref{kobhyp} to each of the cases contained in Table 1.
\subsection{Hyperbolic orbits and Satake diagrams} 
There is an effective way of classifying hyperbolic orbits in $\mathfrak{g}^{c}$ and in $\mathfrak{g}$ using Dynkin and Satake diagrams  (\cite{ok}, Section 7). We need a brief description of this method.
\noindent
Recall that we are given a Cartan decomposition $\mathfrak{g}=\mathfrak{k}+\mathfrak{p}$ and a split Cartan subalgebra $\mathfrak{t}$ of $\mathfrak{g}$. Notice that $\mathfrak{t}^c$ is a Cartan subalgebra of $\mathfrak{g}^{c} .$ Let $\Delta$ be a root system of $\mathfrak{g}^c$ with respect to $\mathfrak{t}^c$ and choose a set of positive roots $\Delta^{+} $ and a simple root system $\Pi \subset \Delta^{+}.$ We can take $\mathfrak{t}_{r} ,$ a real form of $\mathfrak{t}^c$ given by
$$\mathfrak{t}_{r}=\{X\in\mathfrak{t}^c\,|\,\forall \alpha\in\Delta,\alpha(X)\in\mathbb{R}\}.$$
 Consider the closed Weyl chamber
$$\mathfrak{t}_{r}^+=\{X\in\mathfrak{t}_{r}\,|\,\forall\alpha\in\Delta^+,\alpha(X)\geq 0\}.$$
For every $X \in \mathfrak{t}_{r}$ we define
$$\psi_{X}:\Pi \rightarrow \mathbb{R}, \ \alpha \rightarrow \alpha (X).$$
The above map is called the {\it weighted Dynkin diagram} of $X\in \mathfrak{t}_{r},$ and the value $\alpha (X)$ is the weight of the node $\alpha.$ Since $\Pi$ is a basis of the dual space $\mathfrak{t}_{r}^{\ast}$, the map
$$\psi : \mathfrak{t}_{r} \rightarrow Map(\Pi , \mathbb{R}), \ X \rightarrow \Psi_{X}$$
is an linear isomorphism. One can show that the map
$$\psi |_{\mathfrak{t}_{r}^{+}} : \mathfrak{t}_{r}^{+} \rightarrow \operatorname{Map}(\Pi , \mathbb{R}_{\geq 0}), \ X \rightarrow \psi_{X}$$
is bijective. 
Recall that using $\mathfrak{a}\subset \mathfrak{t}$ and $\Pi$ we can define \textit{the Satake diagram} $S_{\mathfrak{g}}$ of $\mathfrak{g} .$ The Satake diagram $S_{\mathfrak{g}}$ is the Dynkin diagram $D$ of $\mathfrak{g}^c$ with the following modifications
\begin{itemize}
	\item If for $\alpha \in \Pi$ $\alpha |_{\mathfrak{a}}=0$ then the vertex corresponding to $\alpha$ is black.
	\item If for $\alpha , \beta \in \Pi$ $\alpha |_{\mathfrak{a}} = \beta |_{\mathfrak{a}} \neq 0$ then the vertices corresponding to $\alpha$ and $\beta$ are joined with an arrow.
\end{itemize}
 One can show that the isomorphism class of a Satake diagram does not depend on the choice of $\mathfrak{t}$ and $\Pi .$
\begin{definition}
{\rm Let $\psi_{X} \in \operatorname{Map}(\Pi, \mathbb{R})$ be the weighted Dynkin diagram of $\mathfrak{g}^{c}$ and $S_{\mathfrak{g}}$ be the Satake diagram of $\mathfrak{g}$ (constructed as above). We say that $\psi_{X}$ {\it matches} $S_{\mathfrak{g}}$ if all black nodes in $S_{\mathfrak{g}}$ have weights equal to 0 in $\psi_{X}$ and every two nodes joined by an arrow in $S_{\mathfrak{g}}$ have the same weights in $\psi_{X}$.}
\label{dfff}
\end{definition}
We summarize the results of \cite{ok} in the form of a single theorem (see \cite{ok}, Fact 6.1, Proposition 4.5, Theorem 7.4, Theorem 7.5]).
\begin{theorem}[\cite{ok}]\label{thm:ok} Let $\mathfrak{g}$ be semisimple real Lie algebra. The following holds:
\begin{enumerate}
\item there is a bijective correspondence between complex hyperbolic orbits $O_X\subset\mathfrak{g}^c$ and elements $X\in\mathfrak{t}_{r}^+$;
\item the map $\psi:\mathfrak{t}_{r}\rightarrow \operatorname{Map}(\Pi,\mathbb{R})$ induces a linear isomorphism
$$\psi|_{\mathfrak{a}}:\mathfrak{a}\rightarrow\,\{\psi_X\,\,\text{\rm matches}\,S_{\mathfrak{g}}\}, X\rightarrow \psi_X;$$
\item The weighted Dynkin diagram $\psi_X$ of a complex hyperbolic orbit $O_X$ matches $S_{\mathfrak{g}}$ if and only if $O_X$ meets $\mathfrak{g}$. Moreover, $O_X\cap\mathfrak{g}=\operatorname{Ad}_G(X'), X'\in O_X\cap\mathfrak{g}$, that is, $O_X\cap\mathfrak{g}$ is a single hyperbolic orbit in $\mathfrak{g}$.
\end{enumerate}
\end{theorem}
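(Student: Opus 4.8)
The plan is to deduce the three assertions from standard structure theory together with the combinatorics of the Satake diagram, in the order (1), (2), (3). For \emph{Assertion (1)}: a hyperbolic element $X\in\mathfrak{g}^c$ generates a one–dimensional $\mathbb{R}$–split toral subalgebra; extending it to a maximal one and using that all maximal $\mathbb{R}$–diagonalizable subalgebras of $\mathfrak{g}^c$ are $\operatorname{Ad}_{G^c}$–conjugate — and that $\mathfrak{t}_{r}$ is one of them — we see that every hyperbolic orbit meets $\mathfrak{t}_{r}$. The Weyl group $W_{\mathfrak{g}^c}$, realized by elements of $\operatorname{Ad}_{G^c}$ normalizing $\mathfrak{t}_{r}$, acts on $\mathfrak{t}_{r}$ with $\mathfrak{t}_{r}^{+}$ as a strict fundamental domain, so $O_X\mapsto O_X\cap\mathfrak{t}_{r}^{+}$ is a bijection onto $\mathfrak{t}_{r}^{+}$. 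Since $\Pi$ is a basis of $\mathfrak{t}_{r}^{*}$, the map $\psi$ is a linear isomorphism, and because every element of $\Delta^{+}$ is a non–negative integral combination of elements of $\Pi$, $\psi$ carries $\mathfrak{t}_{r}^{+}$ bijectively onto $\operatorname{Map}(\Pi,\mathbb{R}_{\geq0})$; composing gives (1).

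For \emph{Assertion (2)}, first note $\mathfrak{a}\subset\mathfrak{t}_{r}$: for $X\in\mathfrak{a}\subset\mathfrak{p}$ the operator $\operatorname{ad}_X$ is self–adjoint, hence has real spectrum, so every root is real–valued on $\mathfrak{a}$. Thus $\psi|_{\mathfrak{a}}$ is the restriction of the linear isomorphism $\psi$ and is injective; its image lies in the ``matching'' subspace directly from the definition of $S_{\mathfrak{g}}$ (black nodes are those $\alpha\in\Pi$ with $\alpha|_{\mathfrak{a}}=0$, and arrow–joined nodes $\alpha,\beta$ satisfy $\alpha|_{\mathfrak{a}}=\beta|_{\mathfrak{a}}$). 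For surjectivity I would compare dimensions: if $S_{\mathfrak{g}}$ has $w$ white nodes joined by $p$ (necessarily disjoint) arrows, then the matching subspace has dimension $w-p$; on the other hand $\dim\mathfrak{a}=\operatorname{rank}_{\mathbb{R}}\mathfrak{g}$ is the number of simple restricted roots, which by Satake's description of $\Sigma$ are precisely the distinct nonzero restrictions $\alpha|_{\mathfrak{a}}$ ($\alpha\in\Pi$), coinciding exactly for arrow–joined pairs, hence $w-p$ in number and linearly independent as a base of $\Sigma$. So the dimensions agree and $\psi|_{\mathfrak{a}}$ is onto.

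For \emph{Assertion (3)}, take the distinguished representative $X\in O_X\cap\mathfrak{t}_{r}^{+}$ from (1). If $\psi_X$ matches $S_{\mathfrak{g}}$, then by (2) its unique $\psi$–preimage, namely $X$, lies in $\mathfrak{a}\subset\mathfrak{g}$, so $O_X$ meets $\mathfrak{g}$. Conversely, if $O_X$ meets $\mathfrak{g}$, pick $Y\in O_X\cap\mathfrak{g}$; then $Y$ is a hyperbolic element of $\mathfrak{g}$, hence is $\operatorname{Ad}_G$–conjugate into $\mathfrak{a}$, and after applying an element of the little Weyl group (realized inside $\operatorname{Ad}_G$) into $\mathfrak{a}^{+}=\mathfrak{a}\cap\mathfrak{t}_{r}^{+}$, using that $\Delta^{+}$ restricts to $\Sigma^{+}\cup\{0\}$. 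The resulting point lies in $O_X\cap\mathfrak{t}_{r}^{+}=\{X\}$, so $X\in\mathfrak{a}$ and $\psi_X$ matches by (2); moreover every element of $O_X\cap\mathfrak{g}$ is then $\operatorname{Ad}_G$–conjugate to $X$, so $O_X\cap\mathfrak{g}=\operatorname{Ad}_G(X)$ is a single hyperbolic orbit.

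The step I expect to require the most care is the real counterpart used in (3): that a hyperbolic element of $\mathfrak{g}$ can be moved into $\mathfrak{a}^{+}$ by $\operatorname{Ad}_G$ alone, rather than merely into $\mathfrak{t}_{r}$ by $\operatorname{Ad}_{G^c}$. This is the $\mathbb{R}$–form analogue of the first ingredient and is extracted from the $KAK$ decomposition together with the uniqueness of the $\mathfrak{t}_{r}^{+}$–representative established in (1); the dimension count in (2) is routine once Satake's classification of restricted root systems is quoted, and (1) itself is standard structure theory of hyperbolic orbits.
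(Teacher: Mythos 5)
The paper does not prove this statement at all: it is quoted verbatim from Okuda's paper (Fact~6.1, Proposition~4.5, Theorems~7.4 and 7.5 of \cite{ok}), so there is no in-paper argument to compare against. Your reconstruction is correct and is essentially the standard argument underlying Okuda's results: (1) via conjugacy of the maximal $\mathbb{R}$-diagonalizable abelian subalgebras of $\mathfrak{g}^c$ and the closed Weyl chamber as a strict fundamental domain; (2) via the dimension count $w-p=\operatorname{rank}_{\mathbb{R}}\mathfrak{g}$, which rests on the fact that the distinct nonzero restrictions $\alpha|_{\mathfrak{a}}$, $\alpha\in\Pi$, form a base of $\Sigma$; (3) via the real conjugacy theorem for maximal $\mathbb{R}$-split abelian subalgebras of $\mathfrak{g}$ together with the little Weyl group. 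You correctly isolate the two points that carry the weight: the compatibility of $\Delta^{+}$ with $\Sigma^{+}$ (so that $\mathfrak{a}^{+}=\mathfrak{a}\cap\mathfrak{t}_{r}^{+}$ and the restrictions of simple roots give the simple restricted roots --- this compatible choice of ordering is implicit in the paper's construction of $S_{\mathfrak{g}}$ and must be assumed), and the fact that a hyperbolic element of $\mathfrak{g}$ is moved into $\mathfrak{a}^{+}$ by $\operatorname{Ad}_G$ alone, which the paper's preliminaries already record (all maximal $\mathbb{R}$-diagonalizable subalgebras of $\mathfrak{g}$ are conjugate, and $W=N_K(\mathfrak{a})/Z_K(\mathfrak{a})$ is realized inside $G$). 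I see no gap.
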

\subsection{Equivalent subalgebras and real hyperbolic orbits }
\begin{theorem}
Assume that $\mathfrak{h}_{1}, \mathfrak{h}_{2}$ are two isomorphic simple subalgebras of non-compact type of $\mathfrak{g}$ different from $\mathfrak{so}(1,7)$ such that
$$Ad_{g}(\mathfrak{h}^{c}_{1})=\mathfrak{h}^{c}_{2}$$
for some $g\in G^c. $ Then there exists a real  hyperbolic orbit in $\mathfrak{g}$ meeting $\mathfrak{h}_{1}$ and $\mathfrak{h}_{2} .$
\label{hyp}
\end{theorem}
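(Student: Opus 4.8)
The plan is to produce a single complex hyperbolic orbit $O_X \subset \mathfrak{g}^c$ that meets both $\mathfrak{h}_1$ and $\mathfrak{h}_2$, and then invoke Theorem \ref{thm:ok}(3) to descend to $\mathfrak{g}$. First I would note that since $\mathfrak{h}_1$ is a simple subalgebra of non-compact type, it contains a nonzero hyperbolic element of $\mathfrak{g}^c$: indeed, fix a Cartan decomposition $\mathfrak{h}_1 = \mathfrak{k}_{h_1} + \mathfrak{p}_{h_1}$ with $\mathfrak{p}_{h_1} \neq \{0\}$, and pick a maximal $\mathbb{R}$-diagonalizable subalgebra $\mathfrak{a}_{h_1} \subset \mathfrak{p}_{h_1}$; any $0 \neq X_1 \in \mathfrak{a}_{h_1}$ is hyperbolic as an element of $\mathfrak{g}$ (hence of $\mathfrak{g}^c$), because $\operatorname{ad}_{X_1}$ acting on $\mathfrak{g}$ is $\mathbb{R}$-diagonalizable — this uses that $\mathfrak{a}_{h_1}$ sits inside a maximal $\mathbb{R}$-split abelian subalgebra $\mathfrak{a}$ of $\mathfrak{g}$, which we may arrange by the standard conjugacy theorems recalled in the Preliminaries. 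So $O_{X_1} := \operatorname{Ad}_{G^c}(X_1)$ is a nontrivial complex hyperbolic orbit meeting $\mathfrak{h}_1$.

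Next I would transport this element to $\mathfrak{h}_2$ using the hypothesis $\operatorname{Ad}_g(\mathfrak{h}_1^c) = \mathfrak{h}_2^c$. Set $X_2 := \operatorname{Ad}_g(X_1) \in \mathfrak{h}_2^c$. Then $X_2$ is again hyperbolic in $\mathfrak{g}^c$ (hyperbolicity of an element of $\mathfrak{g}^c$ is preserved by inner automorphisms of $\mathfrak{g}^c$, since $\operatorname{ad}_{\operatorname{Ad}_g X_1} = g\,(\operatorname{ad}_{X_1})\,g^{-1}$ has the same eigenvalues), and clearly $O_{X_2} = \operatorname{Ad}_{G^c}(X_2) = \operatorname{Ad}_{G^c}(X_1) = O_{X_1}$. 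Thus the \emph{single} complex hyperbolic orbit $O := O_{X_1} = O_{X_2}$ already meets both $\mathfrak{h}_1$ (through $X_1$) and $\mathfrak{h}_2$ (through $X_2$).

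It remains to check that $O$ meets the real form $\mathfrak{g}$ and, more precisely, that the resulting real orbit meets $\mathfrak{h}_1$ and $\mathfrak{h}_2$. By Theorem \ref{thm:ok}(3), $O$ meets $\mathfrak{g}$ if and only if its weighted Dynkin diagram $\psi_{X}$ matches the Satake diagram $S_{\mathfrak{g}}$, and in that case $O \cap \mathfrak{g}$ is a \emph{single} real hyperbolic orbit $\operatorname{Ad}_G(X')$. Here one should work inside $\mathfrak{h}_1$ (and symmetrically $\mathfrak{h}_2$): since $X_1$ already lies in the $\mathbb{R}$-diagonalizable subalgebra $\mathfrak{a}_{h_1} \subset \mathfrak{a} \subset \mathfrak{t}_r$, its weighted Dynkin diagram has real (indeed nonnegative, after conjugating into $\mathfrak{t}_r^+$ by $W$) weights, and the black-node/arrow conditions of Definition \ref{dfff} are automatically satisfied because $X_1 \in \mathfrak{a}$ — this is exactly the content of the isomorphism $\psi|_{\mathfrak{a}}: \mathfrak{a} \to \{\psi_X \text{ matches } S_{\mathfrak{g}}\}$ in Theorem \ref{thm:ok}(2). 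Hence $\psi_{X_1}$ matches $S_{\mathfrak{g}}$, so $O \cap \mathfrak{g}$ is a single real hyperbolic orbit; call it $O^{\mathbb{R}}$. Now $X_1 \in O \cap \mathfrak{g}$, so $X_1 \in O^{\mathbb{R}}$ and $O^{\mathbb{R}}$ meets $\mathfrak{h}_1$; similarly, applying the same argument with $\mathfrak{h}_2$ and a maximal $\mathbb{R}$-split subalgebra of $\mathfrak{p}_{h_2}$ containing (a $W_{\mathfrak{h}_2}$-conjugate of) $X_2$, we get that $X_2 \in \mathfrak{g}$ lies in the same real orbit $O^{\mathbb{R}} = O \cap \mathfrak{g}$, so $O^{\mathbb{R}}$ meets $\mathfrak{h}_2$ as well. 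This gives the desired real hyperbolic orbit.

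The main obstacle I anticipate is the bookkeeping around \emph{conjugation into a common Cartan subalgebra}: the element $X_1 \in \mathfrak{p}_{h_1}$ and the element $X_2 \in \mathfrak{p}_{h_2}$ must be simultaneously placed inside one fixed $\mathfrak{a} \subset \mathfrak{p}$ compatible with the chosen Cartan decomposition $\mathfrak{g} = \mathfrak{k} + \mathfrak{p}$, so that the Satake-diagram matching criterion of Theorem \ref{thm:ok} applies verbatim; one has to use that the Cartan decompositions of $\mathfrak{h}_1$ and $\mathfrak{h}_2$ can be chosen compatibly with that of $\mathfrak{g}$ (as recalled before Theorem \ref{twkob}), and then that all maximal $\mathbb{R}$-split abelian subalgebras of $\mathfrak{p}$ are $K$-conjugate. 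The role of the excluded case $\mathfrak{so}(1,7)$ is presumably that there the relevant matching/uniqueness can fail — $\mathfrak{so}(1,7)$ has real rank $1$ and an exceptional feature (its real forms interact with triality in $\mathfrak{so}(8)$) that breaks the clean correspondence $O \cap \mathfrak{g} = $ single orbit, or makes the two embeddings land on genuinely different $\mathfrak{a}$-classes; I would isolate precisely where the uniqueness in Theorem \ref{thm:ok}(3) is used and verify that the hypothesis "different from $\mathfrak{so}(1,7)$" is exactly what rules out the degenerate behaviour.
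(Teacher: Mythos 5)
There is a genuine gap, and it sits exactly where the real content of the theorem lies. You set $X_2 := \operatorname{Ad}_g(X_1)$ and assert that the complex orbit $O$ ``meets $\mathfrak{h}_2$ (through $X_2$)'' and later that ``$X_2 \in \mathfrak{g}$ lies in the same real orbit.'' But $g \in G^c$ is a \emph{complex} group element, so all you know is $X_2 \in \mathfrak{h}_2^{c}$; there is no reason for $X_2$ to lie in the real subalgebra $\mathfrak{h}_2$, or even in $\mathfrak{g}$. The entire difficulty of the theorem is to pass from ``$O$ meets $\mathfrak{h}_2^c$'' to ``$O$ meets $\mathfrak{h}_2$,'' and your argument never addresses it. Concretely: the $\operatorname{Ad}_{H_2^c}$-orbit of $X_2$ inside $\mathfrak{h}_2^c$ has weighted Dynkin diagram equal to $\psi_{X_1}$ twisted by the diagram automorphism $\tau$ of $\mathfrak{h}_2^c$ induced by $\operatorname{Ad}_g\colon \mathfrak{h}_1^c \to \mathfrak{h}_2^c$ (modulo inner automorphisms), and by Theorem \ref{thm:ok} this orbit meets the real form $\mathfrak{h}_2$ only if the twisted diagram still matches $S_{\mathfrak{h}_2}$. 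For an \emph{arbitrary} nonzero $X_1 \in \mathfrak{a}_{h_1}$ this can fail: for $\mathfrak{h}_1=\mathfrak{h}_2=\mathfrak{so}(1,7)$ every matching diagram is supported on the single white node of the $D_4$ Satake diagram, and a triality twist moves that support to a black node, so \emph{no} nontrivial complex hyperbolic orbit meets both real forms. This is why the statement excludes $\mathfrak{so}(1,7)$; your proposal, which never uses that hypothesis (and says so), would prove the false unrestricted statement.

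The paper's proof repairs exactly this: it does not take an arbitrary $X_1$, but chooses (by a case-by-case inspection of Satake diagrams, possible precisely when $\mathfrak{h}_2 \neq \mathfrak{so}(1,7)$) a nontrivial hyperbolic $X$ whose weighted Dynkin diagram both matches $S_{\mathfrak{h}_2}$ \emph{and} is invariant under all diagram automorphisms of $\mathfrak{h}_2^c$. Writing the automorphism $f$ of $\mathfrak{h}_2^c$ carrying $\mathfrak{h}_2$ to $\operatorname{Ad}_g(\mathfrak{h}_1)$ as $f_1 \circ f_2$ with $f_1$ inner and $f_2$ a diagram automorphism, the invariance gives $f(X)=f_1(X)=\operatorname{Ad}_{h_2}(X)$, so $f(X)$ lies both in $\operatorname{Ad}_g(\mathfrak{h}_1)$ and in the same $G^c$-orbit as $X$; only then does Theorem \ref{thm:ok}(3) yield a single real orbit meeting $\mathfrak{h}_1$ and $\mathfrak{h}_2$. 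To fix your argument you would need to add this invariant choice of $X$ and the $\operatorname{Aut}=\operatorname{Int}\rtimes\operatorname{Aut}(\Pi)$ decomposition step; the parts of your write-up concerning hyperbolicity of $X_1$, the equality $O_{X_1}=O_{X_2}$ of complex orbits, and the use of Theorem \ref{thm:ok}(3) to get a single real orbit are fine as far as they go.
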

\begin{proof}
A case-by-case inspection of the Satake diagrams of simple real Lie algebras shows that there always exists (if $\mathfrak{h}_{2} \neq \mathfrak{so}(1,7)$) a weighted Dynkin diagram $\psi_{X}$ corresponding to a non-trivial hyperbolic orbit in $\mathfrak{h}_{2}^{c}$ with the following properties:
\begin{enumerate}
	\item $\psi_{X}$ matches the Satake diagram of $\mathfrak{h}_{2}$,
	\item $\psi_{X}$ has equal entries in vertices which are conjugated by some automorphism of the Dynkin diagram of $\mathfrak{h}_{2}^{c} ,$
\end{enumerate}
(see Appendix for examples of weighted Dynkin diagrams satisfying the above conditions). It follows from Theorem \ref{thm:ok} that the complex hyperbolic orbit $O_{X}$ meets $\mathfrak{h}_{2}$ (in a single real hyperbolic orbit) and that $X$ is fixed by any  automorphism $\sigma\in\operatorname{Aut}(\mathfrak{h}_2^c)$ induced by the automorphism of the Dynkin diagram. 
Following \cite{ov} (Chapter 3, Section 3.1), we will denote the groups of such automorphisms by  symbols $\operatorname{Aut}(\Pi),\operatorname{Aut}(\Pi_{\mathfrak{g}}),\operatorname{Aut}(\Pi_{\mathfrak{h}_2})$ respectively.
It follows from the assumption that 
$$\operatorname{Ad}_{g}(\mathfrak{h}_1)\subset\mathfrak{h}_2^c,\,\mathfrak{h}_2\subset\mathfrak{h}_2^c.$$
Since $\operatorname{Ad}_g(\mathfrak{h}_1)$ and $\mathfrak{h}_2$ are isomorphic as real forms of $\mathfrak{h}_2^c$, there exists $f\in\operatorname{Aut}(\mathfrak{h}_{2}^c)$ such that 
$$f(\mathfrak{h}_2)=\operatorname{Ad}_g(\mathfrak{h}_1).$$
It is well known that for any connected Lie group $G^c$ with the Lie algebra $\mathfrak{g}^c$ the following equality holds: $\operatorname{Ad}(G^c)=\operatorname{Int}(\mathfrak{g}^c)$. 
Let $H_2^c$ be any connected closed subgroup of $G^c$ corresponding to $\mathfrak{h}_2^c$. Following \cite{ov} (Theorem 3.1, page 105) we can write
$$\operatorname{Aut}(\mathfrak{h}_2^c)=\operatorname{Int}(\mathfrak{h}_2^c)\rtimes\operatorname{Aut}(\Pi_{\mathfrak{h}_2}).$$
Therefore, $f=f_1\circ f_2$, where $f_1\in\operatorname{Int}(\mathfrak{h}_2^c), f_2\in\operatorname{Aut}(\Pi_{\mathfrak{h}_2})$.
Since $X$ is fixed by the automorphisms $\sigma\in\operatorname{Aut}(\Pi_{\mathfrak{h}_2})$, one obtains
$$f(X)=f_1(X).$$
Thus, $f(X)=f_1(X)=\operatorname{Ad}_{h_2}(X)$ for some $h_{2}\in H_{2}^{c}$ and $\operatorname{Ad}_{h_2}(X)\in \operatorname{Ad}_g(\mathfrak{h}_1).$ Therefore the hyperbolic orbit $Ad_{G^c}(X)$ meets $\mathfrak{h}_1 ,$ $\mathfrak{h}_2$ and $\mathfrak{g}.$
But this means that  $\mathfrak{g}\cap O_{X}$ is a single real hyperbolic orbit meeting $\mathfrak{h}_{1}$ and $\mathfrak{h}_{2} ,$ by Theorem \ref{thm:ok}.
\end{proof}
\begin{corollary}
Assume that there are two   embeddings of $\mathfrak{e}_{6(2)}$ into $\mathfrak{g}$, such that their complexifications are equivalent. Any real hyperbolic orbit meeting one copy of $\mathfrak{e}_{6(2)}$  meets the second one.
\label{lemhyp}
\end{corollary}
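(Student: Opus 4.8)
The plan is to reduce the statement to the argument already carried out in the proof of Theorem \ref{hyp}, after isolating one special feature of $\mathfrak{e}_{6(2)}$. First I would record that feature: the Satake diagram of $\mathfrak{e}_{6(2)}$ has no black nodes, and its two arrows join precisely the two orbits of the non-trivial diagram automorphism $\sigma$ of the Dynkin diagram of $\mathfrak{e}_6^c$ (this is read off the tables of Satake diagrams, cf. \cite{ov}). By Definition \ref{dfff} this means that a weighted Dynkin diagram $\psi_X$ of $\mathfrak{e}_6^c$ matches $S_{\mathfrak{e}_{6(2)}}$ if and only if $\psi_X$ is $\sigma$-invariant, i.e. (since $\psi$ is injective on $\mathfrak{t}_r$) if and only if $\sigma(X)=X$. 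Thus for $\mathfrak{e}_{6(2)}$ the locus of elements whose weighted diagram matches the Satake diagram is exactly the $\sigma$-fixed locus; this is what makes \emph{every} relevant hyperbolic orbit behave like the special orbits singled out in the proof of Theorem \ref{hyp}.

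Next I would set up the orbit-chasing. Write $\mathfrak{h}_1,\mathfrak{h}_2\subset\mathfrak{g}$ for the images of the two embeddings of $\mathfrak{e}_{6(2)}$; since their complexifications are equivalent and $\operatorname{Ad}(G^c)=\operatorname{Int}(\mathfrak{g}^c)$, we have $\operatorname{Ad}_g(\mathfrak{h}_1^c)=\mathfrak{h}_2^c$ for some $g\in G^c$. Let $O=\operatorname{Ad}_G(Y)$ be a real hyperbolic orbit meeting $\mathfrak{h}_1$, say $Y\in O\cap\mathfrak{h}_1$ (the case $O=\{0\}$ is trivial). As $\mathfrak{h}_1$ is reductive in $\mathfrak{g}$, $Y$ is hyperbolic in $\mathfrak{h}_1$ as well, so $\operatorname{Ad}_{H_1^c}(Y)\subset\mathfrak{h}_1^c$ is a complex hyperbolic orbit meeting $\mathfrak{h}_1$; let $X_0$ be its unique representative in the closed Weyl chamber $(\mathfrak{t}_r^{\mathfrak{h}_1})^+$. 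Applying Theorem \ref{thm:ok} to $\mathfrak{h}_1$: $\psi_{X_0}$ matches $S_{\mathfrak{h}_1}=S_{\mathfrak{e}_{6(2)}}$, hence $X_0\in\mathfrak{a}_{\mathfrak{h}_1}\subset\mathfrak{h}_1$ and $\operatorname{Ad}_{H_1^c}(X_0)\cap\mathfrak{h}_1=\operatorname{Ad}_{H_1}(X_0)$ is the single real orbit containing $Y$, so $X_0\in\operatorname{Ad}_{H_1}(Y)\subset O$. By the first step $X_0$ is moreover fixed by $\operatorname{Aut}(\Pi_{\mathfrak{h}_1})=\langle\sigma\rangle$. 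In short, $O$ meets $\mathfrak{h}_1$ at a $\sigma$-fixed point $X_0\in\mathfrak{g}$.

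Finally I would run, essentially verbatim, the closing part of the proof of Theorem \ref{hyp}. The algebra $\operatorname{Ad}_{g^{-1}}(\mathfrak{h}_2)$ is a real form of $\mathfrak{h}_1^c$ isomorphic to $\mathfrak{e}_{6(2)}\cong\mathfrak{h}_1$, so there is $f'\in\operatorname{Aut}(\mathfrak{h}_1^c)$ with $f'(\mathfrak{h}_1)=\operatorname{Ad}_{g^{-1}}(\mathfrak{h}_2)$. Using $\operatorname{Aut}(\mathfrak{h}_1^c)=\operatorname{Int}(\mathfrak{h}_1^c)\rtimes\operatorname{Aut}(\Pi_{\mathfrak{h}_1})$ (cf. \cite{ov}), write $f'=f'_1\circ f'_2$ with $f'_1\in\operatorname{Int}(\mathfrak{h}_1^c)=\operatorname{Ad}(H_1^c)$ and $f'_2\in\operatorname{Aut}(\Pi_{\mathfrak{h}_1})$. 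Since $X_0$ is $\sigma$-fixed, $f'(X_0)=f'_1(X_0)=\operatorname{Ad}_{h_1}(X_0)$ for some $h_1\in H_1^c$, and it lies in $f'(\mathfrak{h}_1)=\operatorname{Ad}_{g^{-1}}(\mathfrak{h}_2)$; hence $\operatorname{Ad}_{gh_1}(X_0)\in\mathfrak{h}_2$. On the other hand $\operatorname{Ad}_{gh_1}(X_0)\in\operatorname{Ad}_{G^c}(X_0)$, whose intersection with $\mathfrak{g}$ is the single real orbit $O$ (by Theorem \ref{thm:ok}, since $X_0\in\mathfrak{g}$ is hyperbolic and $X_0\in O$). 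Therefore $\operatorname{Ad}_{gh_1}(X_0)\in O\cap\mathfrak{h}_2$, so $O$ meets the second copy.

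I expect the only real content beyond Theorem \ref{hyp} to be the first step --- the identification of the condition that $\psi_X$ matches $S_{\mathfrak{e}_{6(2)}}$ with the condition $\sigma(X)=X$ --- which is routine once the Satake diagram is in hand; the rest is a line-by-line reprise of the proof of Theorem \ref{hyp}, the difference being that here no special construction of $X_0$ is needed, because for $\mathfrak{e}_{6(2)}$ every hyperbolic orbit meeting a copy is automatically represented by a $\sigma$-fixed element. The one point deserving care is book-keeping of the chosen Cartan data: the pinning of $\mathfrak{h}_1^c$ used in Theorem \ref{thm:ok} (for the weighted and Satake diagrams) must be the same as the one used in the decomposition $\operatorname{Aut}(\mathfrak{h}_1^c)=\operatorname{Int}(\mathfrak{h}_1^c)\rtimes\operatorname{Aut}(\Pi_{\mathfrak{h}_1})$, so that ``$\sigma$-fixed'' is unambiguous and $f'_2$ indeed fixes $X_0$.
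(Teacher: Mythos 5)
Your proposal is correct and follows essentially the same route as the paper: the paper's proof consists precisely of the observation that any weighted Dynkin diagram matching the Satake diagram of $\mathfrak{e}_{6(2)}$ automatically has equal entries at vertices conjugate under the diagram automorphism (since the arrows join exactly those pairs and there are no black nodes), after which the argument of Theorem \ref{hyp} is rerun on the given orbit. You have simply written out in full the details that the paper compresses into ``thus, the proof of Theorem \ref{hyp} applies,'' including the correct care about applying it to an \emph{arbitrary} given orbit via its dominant, automorphism-fixed representative.
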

\begin{proof}
It is sufficient to notice that in this case $\mathfrak{h}_1=\mathfrak{h}_2=\mathfrak{e}_{6(2)}$ and that {\it any} weighted Dynkin diagram matching the Satake diagram of $\mathfrak{e}_{6(2)}$ has equal entries in vertices which are conjugate by some automorphism of the Dynkin diagram of $\mathfrak{e}_6$. Thus, the proof of Theorem \ref{hyp} applies.
\end{proof}

\section{Proof of Theorem \ref{tw1}}\label{sec:proof}
We eliminate each case in Table \ref{tt1} using the results from the previous sections and the relations between linear equivalence classes and equivalence classes of embeddings of subalgebras into exceptional Lie subalgebras found in \cite{min}. In greater detail, we use the following.

\begin{theorem}[\cite{min}, Theorem 7]
Let $\tilde{\mathfrak{g}}^{c}$ be a complex simple exceptional Lie algebra, and $\mathfrak{g}^{c}$ a semisimple Lie algebra. If the pair $(\mathfrak{g}^{c},\tilde{\mathfrak{g}}^{c})$ is not listed below then the linear equivalence class of an arbitrary embedding $\varepsilon(\mathfrak{g}^{c})\subset\tilde{\mathfrak{g}}^{c}$ consists of a single class of equivalent embeddings.
$$\tilde{\mathfrak{g}}^{c}= \mathfrak{e}_{6}^{c}, \ \ \mathfrak{g}^{c}=\mathfrak{sl}_{3}^{c}, \mathfrak{so}(5,\mathbb{C}), \mathfrak{g}_{2}^{c},$$
$$\tilde{\mathfrak{g}}^{c}=\mathfrak{e}_{7}^{c}, \ \ \mathfrak{g}^{c}=\mathfrak{sl}_{3}^{c}+\mathfrak{sl}_{2}^{c}+\mathfrak{sl}_{2}^{c}, \mathfrak{sl}_{3}^{c}+\mathfrak{sl}_{2}^{c}+\mathfrak{sl}_{2}^{c}+\mathfrak{sl}_{2}^{c},$$
$$\tilde{\mathfrak{g}}^{c}=\mathfrak{e}_{8}^{c}, \ \ \mathfrak{g}^{c}=\mathfrak{h}_{1}^{c}+\mathfrak{sl}_{3}^{c},\mathfrak{so}(5,\mathbb{C})+\mathfrak{sl}_{4}^{c},\mathfrak{sl}_{3}^{c},$$
where $\mathfrak{h}_{1}^{c}=\mathfrak{sl}_{3}^{c},\mathfrak{so}(5,\mathbb{C}),\mathfrak{g}_{2}^{c},\mathfrak{so}(8,\mathbb{C}),\mathfrak{sl}_{2}^{c}+\mathfrak{sl}_{2}^{c}+\mathfrak{sl}_{2}^{c},\mathfrak{sl}_{2}^{c}+\mathfrak{sl}_{2}^{c}+\mathfrak{sl}_{2}^{c}+\mathfrak{sl}_{2}^{c}.$
\label{twmin}
\end{theorem}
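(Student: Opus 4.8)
The plan is to deduce the statement from Dynkin's classification by a finite case analysis. One direction is immediate: if $\varepsilon'=\phi\circ\varepsilon$ with $\phi\in\operatorname{Int}(\tilde{\mathfrak g}^{c})$, then $\phi$ lifts to $\operatorname{Ad}(g)$ for $g$ in the simply connected group and $\rho\circ\phi\cong\rho$ for every representation $\rho$, so $\rho\circ\varepsilon'\cong\rho\circ\varepsilon$ and $\varepsilon,\varepsilon'$ are linearly equivalent; hence only the converse requires proof. Dynkin's work provides, for each complex exceptional $\tilde{\mathfrak g}^{c}$, a complete list of semisimple subalgebras up to linear equivalence — equivalently, of linear-equivalence classes of embeddings $\mathfrak g^{c}\hookrightarrow\tilde{\mathfrak g}^{c}$ — together with their Dynkin indices, the decompositions of the fundamental representations, and the centralizers and normalizers. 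First I would go through this list and, for each entry, count the number of $\operatorname{Int}(\tilde{\mathfrak g}^{c})$-conjugacy classes of embeddings it contains, the assertion being that this count equals $1$ outside the tabulated pairs.

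For a fixed semisimple subalgebra $\mathfrak s=\varepsilon(\mathfrak g^{c})\subset\tilde{\mathfrak g}^{c}$, the number of conjugacy classes inside its linear-equivalence class is controlled by the group of automorphisms of $\mathfrak s$ that preserve the isomorphism type of $\rho|_{\mathfrak s}$ for every representation $\rho$ of $\tilde{\mathfrak g}^{c}$ — the branching-invisible ones — taken modulo those induced by $N_{\operatorname{Int}(\tilde{\mathfrak g}^{c})}(\mathfrak s)$. A branching-invisible automorphism of $\mathfrak s$ can only permute isomorphic simple ideals or act on the centre of $\mathfrak s$, so it is nontrivial only for rather special $\mathfrak s$. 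This already localizes every possible discrepancy, explains why most of them sit at $\mathfrak e_{6}$ — whose outer automorphism of order two is the obvious source of a branching-invisible twist that is not inner — and predicts that the $\mathfrak e_{7}$- and $\mathfrak e_{8}$-exceptions must involve a product containing $\mathfrak{sl}_{2}$-factors or an $\mathfrak{sl}_{3}$ carrying its outer automorphism, which is exactly what appears in the list.

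The execution then splits into the two standard families. For regular semisimple subalgebras one iterates the Borel--de Siebenthal procedure on extended Dynkin diagrams and checks that two regular subalgebras of the same type are $\operatorname{Int}$-conjugate precisely when the corresponding root subsystems are conjugate under $W(\tilde{\mathfrak g}^{c})$, a finite computation which is forced already by linear equivalence. For $S$-subalgebras one uses Dynkin's tables of $S$-subalgebras of the exceptional algebras together with their explicit normalizers, and resolves the conjugacy question inside the (small) normalizer. Collecting the two cases and recording where the count exceeds one should reproduce the list in the statement, with the two non-conjugate copies of $\mathfrak{sl}_{3}$ tied to the $\mathbb Z/3$-gradings of $\mathfrak e_{6}$ and the $\mathfrak{so}(5,\mathbb C)=\mathfrak{sp}(4,\mathbb C)$ and $\mathfrak g_{2}^{c}$ subalgebras inside that picture, together with their analogues in $\mathfrak e_{7},\mathfrak e_{8}$. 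The hard part will be the negative direction: for every other entry of Dynkin's list one must show that agreement of all branching multiplicities — equivalently, of the index data and the decompositions of the fundamental representations — already forces $\operatorname{Int}$-conjugacy, and there is no uniform shortcut for this; it rests on the full force of Dynkin's numerical invariants and of the normalizer computations, case by case, which is why we take the statement from \cite{min} rather than reproving it here.
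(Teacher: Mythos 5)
This theorem is not proved in the paper at all: it is imported verbatim from Minchenko (\cite{min}, Theorem~7), and your proposal likewise ends by deferring the substantive verification to that reference, so at the level of what is actually established the two texts coincide. As a roadmap your outline is broadly the strategy Dynkin and Minchenko follow (the easy direction that conjugacy implies linear equivalence; regular subalgebras via Borel--de Siebenthal and Weyl-group conjugacy of root subsystems; $S$-subalgebras via explicit normalizers).

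However, the ``localization'' step in your second paragraph is genuinely wrong, and the error matters because it would mispredict the very list you are trying to reproduce. You claim that for a fixed subalgebra $\mathfrak s=\varepsilon(\mathfrak g^{c})$ the number of conjugacy classes inside its linear-equivalence class is controlled by branching-invisible automorphisms of $\mathfrak s$ modulo the normalizer, and you conclude that exceptions can only arise when $\mathfrak s$ has permutable isomorphic ideals, a centre, or an outer automorphism. This presupposes that linearly equivalent embeddings have conjugate images, which is exactly what fails in the tabulated cases: the two copies of $\mathfrak{so}(5,\mathbb C)$ (and of $\mathfrak g_{2}^{c}$) in $\mathfrak e_{6}^{c}$ are non-conjugate \emph{as subalgebras}, and both algebras are simple with trivial centre and trivial outer automorphism group, so your criterion would predict no exception there. (Indeed, for a simple subalgebra with no outer automorphisms, two embeddings with conjugate images automatically differ by an inner automorphism, so the only possible source of an exception is non-conjugacy of the images.) The actual mechanism is the outer automorphism $\theta$ of the \emph{ambient} $\mathfrak e_{6}^{c}$: since $\rho\circ\theta\cong\rho^{*}$ for every representation $\rho$ of $\mathfrak e_{6}^{c}$, and every representation of $\mathfrak{so}(5,\mathbb C)$ or $\mathfrak g_{2}^{c}$ is self-dual, the embeddings $\varepsilon$ and $\theta\circ\varepsilon$ are automatically linearly equivalent while, for suitable $\varepsilon$, not related by any inner automorphism. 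Any case-by-case verification along your lines must therefore also compare distinct, non-conjugate subalgebras within one linear-equivalence class, not merely automorphisms of a single fixed image; this is the part of Minchenko's argument your sketch does not reach.
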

Now we perform a case-by-case analysis.

\begin{enumerate}
	\item The cases 1-3, 6 can be eliminated using Theorem \ref{thm:bo} below.
	\begin{theorem}[\cite{bo}, Theorem 2]\label{thm:bo}
	If $L$ acts properly on $G/H$ then
	$$\operatorname{rank}_{\operatorname{a-hyp}}(\mathfrak{l})+\operatorname{rank}_{\operatorname{a-hyp}}(\mathfrak{h})\leq \operatorname{rank}_{\operatorname{a-hyp}}(\mathfrak{g}).$$

	\end{theorem}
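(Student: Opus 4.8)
The plan is to prove the contrapositive by a linear‑algebra argument inside the $(-w_{0})$‑fixed subspace of $\mathfrak{a}$, using Kobayashi's criterion (Theorem~\ref{twkob}) rather than Proposition~\ref{kobhyp} directly. Set $\mathfrak{a}_{0}:=\{X\in\mathfrak{a}\mid -w_{0}(X)=X\}$, a linear subspace of $\mathfrak{a}$, so that $\mathfrak{b}^{+}=\mathfrak{a}_{0}\cap\mathfrak{a}^{+}$; all splittings below are taken compatibly, so that $\mathfrak{a}_{h},\mathfrak{a}_{l}\subseteq\mathfrak{a}$.

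First I would isolate two structural facts about the a‑hyperbolic rank. \emph{Fact A:} $\dim\mathfrak{a}_{0}=\operatorname{rank}_{\operatorname{a-hyp}}\mathfrak{g}$. This holds because $-w_{0}$ is a linear automorphism of the simplicial cone $\mathfrak{a}^{+}$, hence induces a symmetry of the restricted Dynkin diagram and permutes the fundamental restricted coweights; therefore their sum $\rho^{\vee}$ lies in $\mathfrak{a}_{0}$ and in the interior of $\mathfrak{a}^{+}$, which forces $\dim\mathfrak{a}_{0}=\dim(\mathfrak{a}_{0}\cap\mathfrak{a}^{+})=\dim\mathfrak{b}^{+}$. \emph{Fact B:} if $V\subseteq\mathfrak{a}$ is a linear subspace with $-X\in\operatorname{Ad}_{G}(X)$ for every $X\in V$, then $w(V)\subseteq\mathfrak{a}_{0}$ for some $w$ in the little Weyl group. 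To see this, observe that for $X\in V$ the elements $X$ and $-X$ lie in $\mathfrak{a}$ and are $\operatorname{Ad}_{G}$‑conjugate, hence conjugate under the little Weyl group; together with the uniqueness of the Weyl‑chamber representative and the identities $w_{0}^{2}=\operatorname{id}$, $w_{0}\mathfrak{a}^{+}=-\mathfrak{a}^{+}$, this shows the chamber representative of $X$ lies in $\mathfrak{b}^{+}\subseteq\mathfrak{a}_{0}$. Now choose $w$ with $\dim(V\cap w^{-1}\mathfrak{a}^{+})=\dim V$ (possible since the finitely many closed Weyl chambers cover $\mathfrak{a}$); on the nonempty relative interior of $V\cap w^{-1}\mathfrak{a}^{+}$ in $V$ the map $X\mapsto w(X)$ is the chamber‑representative map, so it takes values in $\mathfrak{b}^{+}\subseteq\mathfrak{a}_{0}$, and by linearity of $V$ we get $w(V)\subseteq\mathfrak{a}_{0}$.

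With these in hand the argument assembles quickly. Assume $L$ acts properly on $G/H$; by Theorem~\ref{twkob}, $w(\mathfrak{a}_{l})\cap\mathfrak{a}_{h}=\{0\}$ for every $w$ in the little Weyl group. Put $\mathfrak{b}_{\mathfrak{h}}:=\{X\in\mathfrak{a}_{h}\mid -w_{0}^{\mathfrak{h}}(X)=X\}$, where $w_{0}^{\mathfrak{h}}$ is the longest element of the little Weyl group of $\mathfrak{h}$. By Fact A applied to $\mathfrak{h}$ we have $\dim\mathfrak{b}_{\mathfrak{h}}=\operatorname{rank}_{\operatorname{a-hyp}}\mathfrak{h}$, and since $w_{0}^{\mathfrak{h}}$ is realized by an element of a maximal compact subgroup of $H$ contained in $K\subseteq G$, every $X\in\mathfrak{b}_{\mathfrak{h}}$ satisfies $-X\in\operatorname{Ad}_{G}(X)$; define $\mathfrak{b}_{\mathfrak{l}}\subseteq\mathfrak{a}_{l}$ in the same way. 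Fact B provides $w_{1},w_{2}$ in the little Weyl group with $w_{1}(\mathfrak{b}_{\mathfrak{h}})\subseteq\mathfrak{a}_{0}$ and $w_{2}(\mathfrak{b}_{\mathfrak{l}})\subseteq\mathfrak{a}_{0}$. Applying properness with $w=w_{1}^{-1}w_{2}$ gives $w_{2}(\mathfrak{a}_{l})\cap w_{1}(\mathfrak{a}_{h})=\{0\}$, hence $w_{1}(\mathfrak{b}_{\mathfrak{h}})\cap w_{2}(\mathfrak{b}_{\mathfrak{l}})=\{0\}$, so these are subspaces of $\mathfrak{a}_{0}$ meeting trivially and therefore
$$\operatorname{rank}_{\operatorname{a-hyp}}\mathfrak{h}+\operatorname{rank}_{\operatorname{a-hyp}}\mathfrak{l}=\dim w_{1}(\mathfrak{b}_{\mathfrak{h}})+\dim w_{2}(\mathfrak{b}_{\mathfrak{l}})\le\dim\mathfrak{a}_{0}=\operatorname{rank}_{\operatorname{a-hyp}}\mathfrak{g}.$$

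I expect the real content to sit entirely in Facts A and B: that the cone $\mathfrak{b}^{+}$ spans the whole $(-w_{0})$‑fixed subspace, and that any ``self‑antipodal'' abelian subspace of $\mathfrak{a}$ can be moved inside that fixed subspace by the little Weyl group — the second point is what prevents the naive dimension count with arbitrary cones from working, and it is the step I would scrutinize most carefully. Once these are in place, the rest is precisely the device used in Proposition~\ref{prop:numeric-inv} to obtain $\operatorname{rank}_{\mathbb{R}}\mathfrak{g}\ge\operatorname{rank}_{\mathbb{R}}\mathfrak{h}+\operatorname{rank}_{\mathbb{R}}\mathfrak{l}$, now carried out inside $\mathfrak{a}_{0}$ instead of $\mathfrak{a}$. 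An alternative is to run everything through Proposition~\ref{kobhyp} and Theorem~\ref{thm:ok}, reading $\mathfrak{b}^{+}$ as the parameter space of self‑antipodal hyperbolic orbits; this is the same argument phrased in orbit language.
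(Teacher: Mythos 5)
The paper does not prove Theorem~\ref{thm:bo}; it is quoted from \cite{bo} (Theorem~2) without proof, so there is no internal argument to compare against. Your proposal is correct and is essentially the argument of the cited source: Facts~A and~B (that $\mathfrak{b}^{+}$ spans the whole $(-w_{0})$-fixed subspace, and that a linear subspace of antipodal elements of $\mathfrak{a}$ can be moved into that fixed subspace by a single little-Weyl-group element) are exactly what turns the otherwise invalid cone-dimension count into a count of transversal linear subspaces, after which Theorem~\ref{twkob} finishes the proof.
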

	But $\operatorname{rank}_{\textrm{a-hyp}}(\mathfrak{e}_{6(6)})=4,$  $\operatorname{rank}_{\textrm{a-hyp}}(\mathfrak{e}_{6(-26)})=1$ and the a-hyperbolic ranks of the subalgebras in cases 1-3, 6 are equal to their real ranks. One easily checks that in each of these cases:
		$$\operatorname{rank}_{\textrm{a-hyp}}(\mathfrak{l})+\operatorname{rank}_{\textrm{a-hyp}}(\mathfrak{h})> \operatorname{rank}_{\textrm{a-hyp}}(\mathfrak{g}).$$
	\item The cases 4, 5, 9-12 can be eliminated as follows. Since the embeddings of $\mathfrak{h}^c$ and $\mathfrak{l}^c$ are linearly equivalent, they are equivalent (as thay are not listed in Theorem \ref{twmin}). 
	It follows from Theorem \ref{hyp} that there exists a real hyperbolic orbit meeting $\mathfrak{h}$ and $\mathfrak{l}$ (as $\mathfrak{h}$ and $\mathfrak{l}$ are isomorphic). Using Proposition \ref{kobhyp} we see that $L$ cannot act properly on $G/H$.
	
	
	\item The cases 7, 8. As in the previous cases there is only one linear equivalence class of embeddings of $\mathfrak{e}_{6(2)}^c$ in $\mathfrak{e}_{7(7)}^c $ and thus there is only one equivalence class of embeddings of $\mathfrak{e}_{6(2)}^c$ in $\mathfrak{e}_{7(7)}^c $ (again, by Theorem \ref{twmin}). Let $(\mathfrak{e}_{7(7)},\mathfrak{e}_{6(2)}\oplus \mathfrak{so}(2))$ be the symmetric pair. By \cite{tojo} we know that the symmetric space corresponding to this pair does not admit standard compact Clifford-Klein forms. Therefore for every potential triple $(\mathfrak{e}_{7(7)},\mathfrak{e}_{6(2)}\oplus \mathfrak{so}(2), \mathfrak{l})$ (that is a triple that fulfills the requirements 2. and 3. of Proposition \ref{prop:numeric-inv}) there exists a real hyperbolic orbit in $\mathfrak{g}$ that meets $\mathfrak{l}$ and $\mathfrak{e}_{6(2)}$ (we use a fact that a hyperbolic orbit in $\mathfrak{g}=\mathfrak{k}+\mathfrak{p}$ always meets $\mathfrak{p},$ and thus every hyperbolic orbit meeting $\mathfrak{e}_{6(2)}\oplus \mathfrak{so}(2)$ meets $\mathfrak{e}_{6(2)}$). But by Corollary \ref{lemhyp} this orbit meets every copy of $\mathfrak{e}_{6(2)}$. Thus the cases 7, 8 can not yield standard compact Clifford-Klein forms.

\end{enumerate}

\section{Appendix}

In this section we present examples of weighted Dynkin diagrams of hyperbolic orbits in $\mathfrak{h}_{2}^{c}$ satisfying conditions
\begin{enumerate}
	\item $\psi_{X}$ matches the Satake diagram of $\mathfrak{h}_{2}$,
	\item $\psi_{X}$ has equal entries in vertices which are conjugated by some automorphism of the Dynkin diagram of $\mathfrak{h}_{2}^{c} ,$
\end{enumerate}
stated in the proof of Theorem \ref{hyp}. Let $a,b,c,d \in \mathbb{R}$

\begin{table}
\begin{tabular}{|c|c|c|} \hline
Real form & Satake diagam & Weighted Dynkin diagram \\\hline
$\mathfrak{so}(1,7)$ &\dynkin[edge length=.75cm]{D}{o**.*}&--\\\hline
$\mathfrak{so}(2,6)$ &\dynkin[edge length=.75cm]{D}{*o*.o}&\dynkin[labels={0,a,0,0},edge length=.75cm]{D}{ooo.o}\\\hline
$\mathfrak{so}(3,5)$ &\begin{dynkinDiagram}[edge length=.75cm]{D}{ooo.o}\draw[thick, black,latex'-latex']
(root 3) to [out=-90, in=90] (root 4);\end{dynkinDiagram}&\begin{dynkinDiagram}[labels={b,a,b,b},edge length=.75cm]{D}{ooo.o}\end{dynkinDiagram}\\\hline
$\mathfrak{so}(4,4)$ &\dynkin[edge length=.75cm]{D}{ooo.o}&\dynkin[labels={b,a,b,b},edge length=.75cm]{D}{ooo.o}\\\hline
$\mathfrak{e}_{6(6)} $ &\dynkin[edge length=.75cm]{E}{oooooo}
&\dynkin[labels={a,b,a,c,a,a},edge length=.75cm]{E}{oooooo}\\\hline
$\mathfrak{e}_{6(2)} $ &\begin{dynkinDiagram}[edge length=.75cm]{E}{oooooo} \draw[thick, black,latex'-latex']
(root 3) to [out=-45, in=-135] (root 5);\draw[thick, black,latex'-latex']
(root 1) to [out=-45, in=-135] (root 6);\end{dynkinDiagram}&\dynkin[labels={a,b,a,c,a,a},edge length=.75cm]{E}{oooooo}\\\hline
$\mathfrak{e}_{6(-14)} $ &\begin{dynkinDiagram}[edge length=.75cm]{E}{oo***o} \draw[thick, black,latex'-latex']
(root 1) to [out=-45, in=-135] (root 6);\end{dynkinDiagram}&\dynkin[labels={a,b,0,0,0,a},edge length=.75cm]{E}{oooooo}\\\hline
$\mathfrak{e}_{6(-26)} $ &\begin{dynkinDiagram}[edge length=.75cm]{E}{oo***o} \end{dynkinDiagram}&\dynkin[labels={a,b,0,0,0,a},edge length=.75cm]{E}{oooooo}\\\hline
$\mathfrak{f}_{4(4)} $ &\dynkin[edge length=.75cm]{F}{oooo}&\dynkin[labels={a,b,c,d},edge length=.75cm]{F}{oooo}\\\hline
$\mathfrak{f}_{4(-20)} $ &\dynkin[edge length=.75cm]{F}{***o}&
\dynkin[labels={0,0,0,a},edge length=.75cm]{F}{oooo}\\\hline

\end{tabular}
\end{table}

Faculty of Mathematics and Computer Science
\vskip6pt
University of Warmia and Mazury
\vskip6pt
S\l\/oneczna 54, 10-710 Olsztyn, Poland
\vskip6pt
e-mail adresses:
\vskip6pt
mabo@matman.uwm.edu.pl (MB)
\vskip6pt
piojas@matman.uwm.edu.pl (PJ)
\vskip6pt
aleksymath46@gmail.com (AT)

\end{document}